\documentclass[11pt]{article}
\usepackage[margin=1in]{geometry}
\usepackage{graphicx,xcolor,cite,mathtools}
\usepackage{amssymb,amsmath,amsthm,mathrsfs,paralist,bm,esint,setspace}
\RequirePackage[colorlinks,citecolor=blue,urlcolor=blue]{hyperref}

\newcommand{\R}{{\mathbb{R}}}
\newcommand{\E}{\mathrm{E}}

\renewcommand{\P}{\mathrm{P}}
\renewcommand{\d}{\mathrm{d}}

\newcommand{\e}{\mathrm{e}}

\DeclareMathOperator{\Cov}{\text{\rm Cov}}

\DeclarePairedDelimiter\floor{\lfloor}{\rfloor}
\newcommand{\ca}{\mathcal{A}}
\newcommand{\vep}{\varepsilon}
\newcommand{\cl}{\mathcal{L}}
\renewcommand{\P}{\mathbb{P}}

\newcommand{\cd}{\mathcal{D}}

\newcommand{\ce}{\mathcal{E}}

\newcommand{\Z}{\mathbb{Z}}
\newcommand{\N}{\mathbb{N}}

\newcommand*{\bo}{\boldsymbol}

\newcommand{\cb}{\mathcal{B}}
\newcommand{\cc}{\mathcal{C}}

\title{Macroscopic Hausdorff dimension of the level sets of the Airy processes}
\author{ Sudeshna Bhattacharjee and Fei  Pu
	}
\date{}                                           

\begin{document}
\newtheorem{stat}{Statement}[section]
\newtheorem{proposition}[stat]{Proposition}
\newtheorem*{prop}{Proposition}
\newtheorem{corollary}[stat]{Corollary}
\newtheorem{theorem}[stat]{Theorem}
\newtheorem{lemma}[stat]{Lemma}
\theoremstyle{definition}
\newtheorem{definition}[stat]{Definition}
\newtheorem*{cremark}{Remark}
\newtheorem{remark}[stat]{Remark}
\newtheorem*{OP}{Open Problem}
\newtheorem{example}[stat]{Example}
\newtheorem{nota}[stat]{Notation}
\numberwithin{equation}{section}
\maketitle

\begin{abstract}
          We study the Macroscopic Hausdorff dimension of the upper and lower level sets of the Airy processes, 
          following the general method developed in Khoshnevisan et al. \cite{KKX17}.  For the Airy$_1$ process, the approach to  macroscopic Hausdorff dimension of level sets hinges on 
          some inequalities for its joint probabilities, while for the Airy$_2$ process, we make use of some quantitative estimates on the tail probabilities of its maximum and minimum over an interval. 
\end{abstract}

\bigskip\bigskip

\noindent{\it \noindent MSC 2010 subject classification: 60A10, 60F05, 60H15.}
 \\
\noindent{\it Keywords: Airy processes, level sets, macroscopic Hausdorff dimension.}


{
}

\section{Introduction}

The Airy$_1$ and Airy$_2$ processes are introduced by Sasamoto \cite{Sas05} and Pr\"ahofer and Spohn \cite{PSpng02} respectively in the context of random growth models lying in the KPZ universality class.  They are stationary stochastic processes whose finite-dimensional distributions are given in terms of the Fredholm determinant (see \cite[Section 4.1]{WFS17}).  Recently, the limit theorems for the Airy processes have been studied in \cite{Pu23} and \cite{BaB24}. 
According to \cite[Theorem 1.4]{Pu23} and  \cite[Theorem 1.1(i)]{BaB24}, we see from stationarity of the Airy$_1$ and Airy$_2$ processes that
          \begin{align}
          \limsup_{t\to\infty} \frac{\mathcal{A}_1(t)}{\frac12((3\log t)/2)^{2/3}} &= 1, \quad \text{a.s.}\label{eq:sup}
          \\
                    \limsup_{t\to\infty} \frac{\mathcal{A}_2(t)}{((3\log t)/4)^{2/3}} &= 1, \quad \text{a.s.}\label{eq:sup2}
          \end{align}

The limits in \eqref{eq:sup} and \eqref{eq:sup2} indicate that $t\mapsto\frac12((3\log t)/2)^{2/3}$ and $t\mapsto ((3\log t)/4)^{2/3}$ are gauge functions for measuring the tall peaks of the random height functions $t\mapsto \mathcal{A}_1(t)$ and $t\mapsto \mathcal{A}_2(t)$, respectively.  We are interested in the upper level sets of the Airy processes and consider the random sets
\begin{align*}
\mathcal{U}_1(\gamma)&:=\left\{t>\e: \mathcal{A}_1(t)> \frac{\gamma}{2}((3\log t)/2)^{2/3} \right\}, \\
\mathcal{U}_2(\gamma)&:=\left\{t>\e: \mathcal{A}_2(t)> \gamma((3\log t)/4)^{2/3} \right\},
\end{align*}
for $\gamma>0$.

Denote by ${\rm Dim_H}(E)$ the (Barlow-Taylor)  macroscopic Hausdorff dimension of $E\subset \R$ (see Section \ref{sec:pre} for the definition and related information). Our first goal is to determine the macroscopic Hausdorff dimension of the upper level sets of the Airy processes.

\begin{theorem}\label{th:Hausdorff}
           For $\gamma\in(0, 1)$,
           \begin{align}
           {\rm Dim_{H}}(\mathcal{U}_1(\gamma))&=1-\gamma^{3/2}, \quad \text{a.s.} \label{eq:dim}\\
           {\rm Dim_{H}}(\mathcal{U}_2(\gamma))&=1-\gamma^{3/2}, \quad \text{a.s.}\label{eq:dimU2}
           \end{align}           
\end{theorem}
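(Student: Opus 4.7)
The plan is to follow the framework of \cite{KKX17} and prove matching upper and lower bounds on ${\rm Dim_{H}}(\mathcal{U}_i(\gamma))$ via a first-moment covering argument and a second-moment Paley-Zygmund argument, respectively. Both rest on the Tracy-Widom one-point tail asymptotics $\P(\mathcal{A}_1(0) > y) \asymp \exp(-\tfrac{4\sqrt 2}{3}y^{3/2})$ (coming from GOE together with the normalization implicit in \eqref{eq:sup}) and $\P(\mathcal{A}_2(0) > y) \asymp \exp(-\tfrac{4}{3}y^{3/2})$ (GUE). Substituting $y = \gamma h_i(t)$ with $h_1(t) = \tfrac12((3\log t)/2)^{2/3}$ and $h_2(t) = ((3\log t)/4)^{2/3}$ gives the uniform one-point decay
\[
\P\bigl(\mathcal{A}_i(t) > \gamma h_i(t)\bigr) = t^{-\gamma^{3/2}+o(1)},\qquad i=1,2,
\]
which, summed over the dyadic shell $S_n := [2^n, 2^{n+1})$, is what produces the target exponent $1-\gamma^{3/2}$.

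For the upper bound, I cover $\mathcal{U}_i(\gamma)\cap S_n$ by the unit intervals $[k,k+1]$ with $k\in S_n\cap \N$ satisfying $\max_{[k,k+1]}\mathcal{A}_i > \gamma h_i(k)$. A quantitative max-tail estimate of the form $\P(\max_{[k,k+1]}\mathcal{A}_i > y) \lesssim y^{O(1)}\,\P(\mathcal{A}_i(0)>y)$ — derived for Airy$_1$ from the joint probability inequalities emphasized in the abstract, and for Airy$_2$ from the paper's quantitative max tail estimate — bounds the expected number of such intervals by $2^{n(1-\gamma^{3/2})+o(1)}$. Borel-Cantelli along $n$ then yields ${\rm Dim_{H}}(\mathcal{U}_i(\gamma)) \leq 1-\gamma^{3/2}$ almost surely.

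For the lower bound, fix a large constant $\kappa$, set $L_n := \{k\in \kappa\Z : 2^n \leq k < 2^{n+1}\}$, and introduce $\xi_k := \mathbf{1}\{\min_{[k,k+1]}\mathcal{A}_i > \gamma h_i(2^{n+1})\}$; on $\{\xi_k=1\}$ the entire unit interval $[k,k+1]$ lies in $\mathcal{U}_i(\gamma)$, and the spacing $\kappa$ keeps the resulting intervals disjoint. A matching min-tail lower bound gives $\E N_n \gtrsim 2^{n(1-\gamma^{3/2})-o(1)}$ for $N_n := \sum_{k\in L_n}\xi_k$. The critical input is the two-point decoupling
\[
\P(\xi_j=1,\xi_k=1) \leq C\,\P(\xi_j=1)\,\P(\xi_k=1),\qquad j,k\in L_n,\ |j-k|\geq \kappa,
\]
supplied by the joint probability inequalities for Airy$_1$ and by the two-point kernel decay (packaged through the min estimates) for Airy$_2$. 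Paley-Zygmund then yields $N_n \geq c\,\E N_n$ with positive probability, and the near-independence of widely separated dyadic shells upgrades this via Borel-Cantelli to the almost-sure statement that $\mathcal{U}_i(\gamma)\cap S_n$ contains at least $2^{n(1-\gamma^{3/2})-o(1)}$ disjoint unit subintervals for infinitely many $n$; the standard KKX criterion then forces ${\rm Dim_{H}}(\mathcal{U}_i(\gamma)) \geq 1-\gamma^{3/2}$.

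The main obstacle is the two-point tail decoupling. For Airy$_1$, the available super-exponential covariance bound controls only second moments, and upgrading it to an inequality of the form $\P(\mathcal{A}_1(s)>y,\mathcal{A}_1(t)>y) \leq C\,\P(\mathcal{A}_1(s)>y)\,\P(\mathcal{A}_1(t)>y)$ uniformly in the tail level is the critical step — precisely why the paper develops dedicated joint probability inequalities. For Airy$_2$, the determinantal / quasi-Gaussian structure makes the joint law more tractable in principle, but the real work is to extract quantitative max and min tail bounds over unit intervals (not merely at single points) with the correct $y^{3/2}$-rate, which is exactly what the paper's tail estimates on maxima and minima deliver.
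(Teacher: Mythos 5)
Your upper bound is essentially the paper's: cover by unit intervals, use a max-tail estimate over $[0,1]$, sum, and invoke the first-moment criterion (\cite[Theorem 4.1]{KKX17}). That part is fine.

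Your lower bound, however, is a genuinely different route, and it has a gap that I do not think can be repaired without changing the strategy. A Paley--Zygmund argument on $N_n = \sum_k \xi_k$ controls only the \emph{count} of unit intervals hit in the $n$th shell; it says nothing about where in the shell those intervals sit. But for the macroscopic Hausdorff content $\nu_\rho^n$, clustered hits are cheap to cover: if the $\sim 2^{n(1-\gamma^{3/2})}$ hit intervals all fell inside a window of length $\e^{n\beta}$ with $\beta<1$, a single covering interval of that length would give $\nu_\rho^n \lesssim \e^{-n(1-\beta)\rho}$, which is summable for every $\rho>0$, defeating the lower bound entirely. So "contains at least $2^{n(1-\gamma^{3/2})-o(1)}$ disjoint unit subintervals i.o." is not enough for ${\rm Dim_H}\ge 1-\gamma^{3/2}$; the "standard KKX criterion" for lower bounds is the $\theta$-thickness criterion (Definition \ref{def:thick} and Proposition \ref{prop:thick}), which requires the set to hit \emph{every} cell $[x, x+\e^{n\theta})$ of the grid $\Pi_n(\theta)$ for all large $n$. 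The paper verifies thickness cell-by-cell: it bounds $\P(\text{cell has no exceedance})$ so that the sum over all cells and all shells is finite, and Borel--Cantelli gives the a.s.\ statement directly — no Paley--Zygmund, no positive-to-almost-sure upgrade, and hence no need for the unsubstantiated ``near-independence of widely separated dyadic shells'' step you rely on.

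Two further issues. First, your $\xi_k$ requires $\min_{[k,k+1]} \mathcal{A}_i$ to exceed the threshold, so you need a \emph{lower} bound on the unit-interval min-tail with the right exponent, which is substantially harder and not provided by any input you cite; the paper only needs sup-exceedance at a discrete grid of points inside each cell, which is what the sub-exponential product/FKG estimate delivers. Second, for Airy$_2$ you propose deriving the two-point decoupling "from the two-point kernel decay," but the Airy$_2$ covariance decays only polynomially (Widom), which is exactly why the FKG/covariance route fails for Airy$_2$ in the paper; they replace it with quantitative exponential LPP estimates (Proposition \ref{p:Airy_2HDUL}) to control $\P(\sup_{[0,t]}\mathcal{A}_2\le x)$ over a whole cell at once. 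Your Airy$_2$ decoupling, as described, would not close.
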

The notion of macroscopic Hausdorff dimension is due to Barlow and Taylor \cite{BaT89, BaT92}, which describes the large-scale geometry of a set.  Khoshnevisan et al. \cite{KKX17} make use of the macroscopic Hausdorff dimension to study the 
multifractal behavior of the peaks of the solution to stochastic PDEs. In light of the definition of multifractality in \cite[Definition 1.1]{KKX17}, Theorem \ref{th:Hausdorff} implies that the tall peaks of the Airy processes are multifractal 
with respect to their corresponding gauge functions.
We refer to \cite{DaP23,GhY23, KKX18, Yi23} for the multifractal properties of the peaks for some random models related to stochastic PDEs.

We are also interested in the fractal properties of the valleys of the Airy processes.  Basu and Bhattacharjee \cite{BaB24} 
have recently obtained precise limits for the asymptotic behavior of the minimum of the Airy processes. According to \cite[Theorem 1.1(ii), Theorem 1.2(ii)]{BaB24} and by the stationarity of  the Airy processes, we have
\begin{align*}
          &\liminf_{t\to\infty} \frac{\mathcal{A}_1(t)}{(3\log x)^{1/3}} =-1, \quad \text{a.s.}\\
          &\liminf_{t \rightarrow \infty} \frac{\ca_2(t)}{(12 \log t)^{1/3}}=-1, \quad \text{a.s.}
          \end{align*}
This motivates us to consider the lower level sets of the Airy processes         
\begin{align*}
&\mathcal{L}_1(\gamma):=\left\{t>\e: \mathcal{A}_1(t)< -\gamma(3\log t)^{1/3} \right\}, \\
&\mathcal{L}_2(\gamma):=\left \{t>\e:\ca_2(t)<-\gamma(12 \log t)^{1/3} \right \},
\end{align*}
for $\gamma>0$.

Analogous to Theorem \ref{th:Hausdorff}, we have the following. 

\begin{theorem}\label{th:Hausdorff2}
           For $\gamma\in(0, 1)$,
           \begin{align}
           {\rm Dim_{H}}(\mathcal{L}_1(\gamma))&=1-\gamma^{3}, \quad \text{a.s.} \label{eq:dimL1}\\
           {\rm Dim_{H}}(\mathcal{L}_2(\gamma))&=1-\gamma^{3}, \quad \text{a.s.} \label{eq:dimL2}
           \end{align}           
\end{theorem}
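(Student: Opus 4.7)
The plan is to adapt the covering / second-moment framework of Khoshnevisan et al.\ \cite{KKX17} that underlies Theorem \ref{th:Hausdorff}, replacing the upper-tail inputs by lower-tail analogues. The Barlow--Taylor macroscopic Hausdorff dimension is computed shell by shell on the dyadic annuli $\mathcal{S}_n:=[2^n,2^{n+1})$, and the exponent $\gamma^3$ (rather than $\gamma^{3/2}$) appears because the Tracy--Widom GOE and GUE laws have cubic lower tails: $\log\P(\mathcal{A}_1(0)<-y)\sim-\tfrac{1}{3}y^3$ and $\log\P(\mathcal{A}_2(0)<-y)\sim-\tfrac{1}{12}y^3$ as $y\to\infty$. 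With the gauges $g_1(t):=(3\log t)^{1/3}$ and $g_2(t):=(12\log t)^{1/3}$ this makes $\P(\mathcal{A}_i(t)<-\gamma g_i(t))$ of order $t^{-\gamma^3}$, which is the source of the $1-\gamma^3$ in the conclusion.

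For the upper bound ${\rm Dim_H}(\mathcal{L}_i(\gamma))\le 1-\gamma^3$, we cover $\mathcal{S}_n$ by unit intervals $I_{n,k}:=[k,k+1)$ and apply a union bound; the key input is a bound on $\P(\min_{t\in I_{n,k}}\mathcal{A}_i(t)<-\gamma g_i(k))$ of order $k^{-\gamma^3+\varepsilon}$. For $\mathcal{A}_2$ this is obtained from the quantitative min--max tail bounds on intervals that are already used for $\mathcal{U}_2(\gamma)$ together with the cubic one-point lower tail; for $\mathcal{A}_1$ it is extracted from a joint probability inequality in the same spirit as the one used for $\mathcal{U}_1(\gamma)$. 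Summing $\sim 2^n$ such estimates and applying Borel--Cantelli in $n$ exhibits covers of $\mathcal{L}_i(\gamma)\cap\mathcal{S}_n$ whose Hausdorff content is at most $2^{n(1-\gamma^3+\varepsilon)}$, and the dimension upper bound follows from the definition.

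For the lower bound ${\rm Dim_H}(\mathcal{L}_i(\gamma))\ge 1-\gamma^3$, we run a Paley--Zygmund argument on the integer count
\[
N_n:=\#\{k\in\mathcal{S}_n\cap\Z:\mathcal{A}_i(k)<-\gamma g_i(k)\}.
\]
Stationarity together with the one-point lower tail yields $\E[N_n]\asymp 2^{n(1-\gamma^3)}$, and the decisive ingredient for the second moment is the near-independence estimate
\[
\P\big(\mathcal{A}_i(s)<-\gamma g_i(s),\,\mathcal{A}_i(t)<-\gamma g_i(t)\big)\lesssim \P\big(\mathcal{A}_i(s)<-\gamma g_i(s)\big)\,\P\big(\mathcal{A}_i(t)<-\gamma g_i(t)\big)
\]
for $s\ne t$ in $\mathcal{S}_n\cap\Z$, with a correction summable over pairs. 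For $\mathcal{A}_1$ this is a lower-tail analogue of the joint probability inequality from the proof of Theorem \ref{th:Hausdorff}; for $\mathcal{A}_2$ it comes from decorrelation of the Airy$_2$ line ensemble at macroscopically separated points, combined with the sharp one-point tail. Paley--Zygmund then forces $N_n\gtrsim 2^{n(1-\gamma^3-\varepsilon)}$ with positive probability uniformly in $n$, which a Borel--Cantelli-type argument upgrades to an almost sure statement, giving the matching lower bound on the macroscopic Hausdorff dimension via the standard dyadic comparison.

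The hardest part, in both cases, will be establishing the two-point lower-tail estimate at the correct cubic decay rate. Since the lower tails decay cubically rather than as a $3/2$-power, the upper-tail joint inequality used for $\mathcal{U}_1(\gamma)$ does not transfer to $\mathcal{L}_1(\gamma)$ automatically, and for $\mathcal{A}_1$ a fresh analysis --- likely via FKG-type considerations or directly through the Fredholm-determinant description --- will be needed. For $\mathcal{A}_2$ the analogous obstacle is a sharp two-point lower tail that goes beyond what follows from a one-point rigidity estimate or a crude Brownian comparison on unit intervals. We expect these joint lower-tail estimates to constitute the principal new technical content of the proof.
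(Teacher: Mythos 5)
Your framework (shell-by-shell covering for the upper bound; a spread-out clustering argument for the lower bound) is the right skeleton and identifies the correct exponent $\gamma^3$ coming from the cubic lower tails. The paper's upper bound proceeds essentially as you describe: it derives $\P(\min_{[0,1]}\mathcal{A}_i\le -x)\le C\e^{-c(1-\vep)x^3}$ (with $c=\tfrac13$ for $\mathcal{A}_1$, $c=\tfrac1{12}$ for $\mathcal{A}_2$) from exponential LPP limits and covers each shell by unit intervals. But for the lower bound your approach departs from the paper and, as sketched, has a genuine gap. A Paley--Zygmund bound on a count $N_n$ of integer hits in shell $n$ yields only a statement on \emph{cardinality}, with positive probability, uniformly in $n$. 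That is not sufficient: (i) macroscopic Hausdorff dimension lower bounds require the points to be spread across blocks at the critical scale, not merely to be numerous --- a set of size $\e^{n(1-\rho)}$ concentrated in a single block of length $\e^{n\rho}$ in each shell has dimension $\le\rho$, not $1-\rho$; and (ii) you do not supply a mechanism to upgrade a positive-probability event per shell into the required almost-sure statement. The paper avoids both problems by proving $\theta$-thickness (Definition~\ref{def:thick}): it partitions each shell into $\asymp\e^{n(1-\theta)}$ blocks of length $\e^{n\theta}$, shows the probability that any given block \emph{misses} the level set is summable over all blocks and shells, and invokes Borel--Cantelli plus Proposition~\ref{prop:thick}. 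This is what guarantees both spreadness and the almost-sure conclusion.

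Two further miscalibrations in where you expect the difficulty. First, for $\mathcal{A}_1$ you predict that the FKG/joint inequality used for $\mathcal{U}_1(\gamma)$ ``does not transfer to $\mathcal{L}_1(\gamma)$ automatically''; in fact it does. The identity at the end of inequality~\eqref{FKG2} shows that the deficiency $\P(\bigcap_j\{X_j>x_j\})-\prod_j\P(X_j>x_j)$ is bounded by exactly the \emph{same} sum of pairwise lower-orthant covariance deficiencies as in~\eqref{FKG1}, so the association + Lebowitz + super-exponential covariance decay~\eqref{eq:cov} machinery applies verbatim to the lower level set, only with the one-point exponent $\tfrac43 x^{3/2}$ replaced by $\tfrac13 x^3$. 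Second, for $\mathcal{A}_2$ you propose using ``decorrelation of the Airy$_2$ line ensemble at macroscopically separated points'' as the near-independence input. This route is blocked: the paper notes that the Airy$_2$ covariance decays only polynomially (Widom), so the covariance-based cluster expansion that works for $\mathcal{A}_1$ fails quantitatively for $\mathcal{A}_2$. The paper instead bypasses any two-point joint estimate and obtains a direct interval bound $\P(\inf_{[0,t]}\mathcal{A}_2\ge -x)$ (Proposition~\ref{p:HDLLB}, proved via Proposition~\ref{t:ELPPLB} in exponential LPP, using independence of restricted passage times in disjoint parallelograms across a far line), which is what makes the block-miss probability in the $\theta$-thickness argument small.
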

Theorem \ref{th:Hausdorff2} suggests that the valleys of the Airy processes are multifractal 
with respect to their corresponding gauge functions. 
Note that the macroscopic Hausdorff dimensions of the upper and lower level sets are exactly the same for the Airy$_1$ and Airy$_2$ processes. This is because that the tail exponents are the same for the one-point distribution of both processes. We will make this point clear in the proofs of Theorems \ref{th:Hausdorff} and \ref{th:Hausdorff2}.

\section{Preliminary}\label{sec:pre}
             In this section, we first follow Barlow and Taylor \cite{BaT89, BaT92} and  Khoshnevisan et al. \cite{KKX17} to introduce
macroscopic Hausdorff dimension.  We will restrict to the one-dimensional case as the Airy processes have only one parameter.  For a Borel set $E\subset \R$, the $n$th shell of $E$ is defined as $E\cap \{(-\e^{n+1}, -\e^n]\cup [\e^n, \e^{n+1})\}$.
Fix a number $c_0>0$. For $\rho>0$, we define $\rho$-dimensional Hausdorff content of the $n$th shell of $E$ as
\begin{align*}
          \nu_{\rho}^n(E):= \inf \sum_{i=1}^m\left(\frac{{\rm length}(Q_i)}{\e^n}\right)^\rho,
\end{align*}
where the infimum is taken over all intervals $Q_1, \ldots, Q_m$ of length $\geq c_0$ that cover the $n$th shell of $E$. The Barlow–Taylor macroscopic Hausdorff dimension of $E\subset \R$ is defined as 
\begin{align*}
           {\rm Dim_{H}}(E):=\inf \left\{\rho>0: \sum_{n=1}^{\infty}\nu_\rho^n(E)<\infty\right\}.
\end{align*}
The macroscopic Hausdorff dimension does not depend on $c_0$ and hence we can choose $c_0=1$; see \cite[Lemma 2.3]{KKX17}.

Khoshnevisan et al. \cite{KKX17} introduced the notion of thickness of a set, an import tool to give a lower bound on the macroscopic Hausdorff dimension which we now recall.  Fix $\theta\in (0, 1)$ and define 
\begin{align} \label{eq:Pi}
\Pi_n(\theta):= \bigcup_{\substack{0\leq i \leq \e^{n(1-\theta)+1}-\e^{n(1-\theta)}\\  i\in \mathbb{Z}}}\left\{\e^n+i\e^{\theta n}\right\}.
\end{align}

\begin{definition}[{{\cite[Definition 4.3]{KKX17}}}] \label{def:thick}
We say that $E\subset \R$ is $\theta$-thick if there exists an integer $M=M(\theta)$ such that 
$          E \cap [x, x+\e^{\theta n}) \neq \emptyset
$
for all $x\in \Pi_n(\theta)$ and $n\geq M$. 
\end{definition}

The following criterion on the lower bound of macroscopic Hausdorff dimension is due to Khoshnevisan et al. \cite{KKX17}. 

\begin{proposition}[{{\cite[Proposition 4.4]{KKX17}}}] \label{prop:thick}
          If $E\subset \R $ is $\theta$-thick for some $\theta\in (0, 1)$, then ${\rm Dim_{H}}(E)\geq 1-\theta$.
\end{proposition}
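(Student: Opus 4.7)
My plan is to prove the stronger quantitative claim that, for every $\rho\in(0,1-\theta)$, there exists $c=c(\rho,\theta)>0$ with $\nu_\rho^n(E)\geq c$ for all sufficiently large $n$. Since this makes $\sum_n\nu_\rho^n(E)=\infty$, the definition of ${\rm Dim_{H}}$ then forces ${\rm Dim_{H}}(E)\geq 1-\theta$.

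The first step is to turn the thickness hypothesis into a large supply of well-separated points of $E$. For $n\geq M(\theta)$ and each $x\in\Pi_n(\theta)$, pick $p_x\in E\cap[x,x+\e^{\theta n})$; the defining intervals are pairwise disjoint and sit inside $[\e^n,\e^{n+1}]$, so the $p_x$ are distinct members of the positive half of the $n$th shell of $E$, and their number $N:=\#\Pi_n(\theta)$ satisfies $N\geq (\e-1)\e^{n(1-\theta)}$.

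The second step is a counting argument against an arbitrary admissible cover $Q_1,\dots,Q_m$ of the $n$th shell with $\ell_i:=\text{length}(Q_i)\geq 1$. Because each $Q_i$ meets at most $\ell_i/\e^{\theta n}+2$ of the disjoint $\e^{\theta n}$-tiles $\{[x,x+\e^{\theta n}):x\in\Pi_n(\theta)\}$, it contains at most that many of the $p_x$. Summing over $i$ gives
\[
N \;\leq\; \frac{1}{\e^{\theta n}}\sum_{i=1}^m\ell_i \;+\; 2m,
\]
so at least one of (a) $m\geq N/4$ or (b) $\sum_i\ell_i\geq N\e^{\theta n}/2$ must hold. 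In case (a), using only $\ell_i\geq 1$,
\[
\sum_{i=1}^m(\ell_i/\e^n)^\rho \;\geq\; m\,\e^{-n\rho} \;\geq\; \tfrac{\e-1}{4}\,\e^{n(1-\theta-\rho)},
\]
which tends to $\infty$ since $\rho<1-\theta$. In case (b), the elementary power-sum inequality $\sum a_i^\rho\geq (\sum a_i)^\rho$ valid for $a_i\geq 0$ and $\rho\in(0,1)$ (proved by normalizing and using $t^\rho\geq t$ for $t\in[0,1]$) gives
\[
\sum_{i=1}^m(\ell_i/\e^n)^\rho \;\geq\; \Bigl(\sum_{i=1}^m \ell_i/\e^n\Bigr)^\rho \;\geq\; \bigl((\e-1)/2\bigr)^\rho.
\]
Taking the infimum over covers yields the desired lower bound on $\nu_\rho^n(E)$.

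I expect the only subtlety to be the additive $+2$ in the tile-counting inequality: without it a single clean bound would close the argument, but with it the proof forks into the ``many short intervals'' branch (a) and the ``few long intervals'' branch (b), where the latter requires the power-sum inequality to convert a lower bound on $\sum_i\ell_i$ into one on $\sum_i\ell_i^\rho$. Apart from that the argument is purely combinatorial and makes no use of probabilistic structure, consistent with the proposition being a deterministic statement about subsets of $\R$.
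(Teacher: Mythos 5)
Your argument is correct; note, however, that the paper itself does \emph{not} prove Proposition~\ref{prop:thick} --- it is imported verbatim from \cite[Proposition~4.4]{KKX17} --- so there is no in-paper proof with which to compare. What you give is a self-contained combinatorial (packing) proof, and it is essentially the standard argument behind the cited result.

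A few small remarks. First, the claim that every interval $[x,x+\e^{\theta n})$ with $x\in\Pi_n(\theta)$ sits inside $[\e^n,\e^{n+1}]$ is not literally true at the right end: for the largest index the interval may overhang $\e^{n+1}$ by at most $\e^{\theta n}$, so the chosen point $p_x$ there may fall outside the $n$th shell and thus need not be covered by the $Q_i$. This costs at most one of the $N$ points, which does not affect the asymptotics; the fix is to discard the last tile. Second, you only track the positive half of the shell, which is fine since it is a subset of what the $Q_i$ must cover. With these cosmetic repairs the dichotomy ``many short intervals'' versus ``few long intervals'' and the use of subadditivity $(\sum a_i)^\rho\le\sum a_i^\rho$ for $\rho\in(0,1)$ both go through, and the conclusion $\nu_\rho^n(E)\ge c(\rho,\theta)>0$ for all large $n$ and every $\rho<1-\theta$ indeed yields ${\rm Dim_{H}}(E)\geq 1-\theta$.
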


We will study the macroscopic Hausdorff dimension of the Airy$_1$ process using the association property.  Recall from Esary et al. \cite{EPW67} that
a random vector $X:=(X_1\,,\ldots,X_m)$ is said to be \emph{associated} if
	\begin{equation}\label{E:assoc}
		\Cov[h_1(X)\,,h_2(X)]\ge0,
	\end{equation}
for every pair of functions $h_1,h_2:\R^m\to\R$
that are nondecreasing in every coordinate and satisfy $h_1(X),h_2(X)\in L^2(\Omega)$. A random field $\Phi=\{\Phi(x)\}_{x\in\R^d}$ is
	\emph{associated} if $(\Phi(x_1)\,,\ldots,\Phi(x_m))$ is associated
	for every $x_1,\ldots,x_m\in\R^d$.
We remark that an associated random vector is also called to satisfy the FKG inequalities; see Newman \cite{New80}.

We recall some useful probability inequalities for an associated random vector $(X_1\,,\ldots,X_m)$. Let $x_1, \ldots, x_m$ be real numbers. Then 
          \begin{align}\label{FKG1}
          &\mathrm{P}\{X_j\leq x_j, 1\leq j\leq m\} - \prod_{j=1}^m\mathrm{P}\{X_j\leq x_j\}  \nonumber\\
            &\qquad \qquad \qquad \qquad\qquad \leq\sum_{1\leq j<k\leq m} 
          \left(\mathrm{P}\{X_j\leq x_j, X_k\leq x_k\} - \mathrm{P}\{X_j\leq x_j\}\mathrm{P}\{X_k\leq  x_k\}\right).
                     \end{align}
The above inequality is proved in \cite[Lemma 2.1]{Pu23},  based on the Lebowitz's inequality (see \cite[Theorem 1.2.2]{PR12} and \cite{Leb72}).
Similarly, using the Lebowitz's inequality, we can derive that 
          \begin{align}\label{FKG2}
          &\mathrm{P}\{X_j> x_j, 1\leq j\leq m\} - \prod_{j=1}^m\mathrm{P}\{X_j> x_j\}  \nonumber\\
            &\qquad \qquad \qquad \qquad\qquad \leq\sum_{1\leq j<k\leq m} 
          \left(\mathrm{P}\{X_j> x_j, X_k> x_k\} - \mathrm{P}\{X_j> x_j\}\mathrm{P}\{X_k> x_k\}\right) \nonumber\\
           &\qquad \qquad \qquad \qquad\qquad =\sum_{1\leq j<k\leq m} 
          \left(\mathrm{P}\{X_j\leq x_j, X_k\leq x_k\} - \mathrm{P}\{X_j\leq x_j\}\mathrm{P}\{X_k\leq x_k\}\right),
          \end{align}
          where the equality holds obviously.  Because the Airy$_1$ process is associated (see \cite[Theorem 1.2]{Pu23}), its marginal satisfies the above two inequalities. Moreover, since the one-point
          distribution of the Airy$_1$ process, i.e., the GOE Tracy-Widom distribution has bounded and continuous density and finite second moment, we deduce from 
           \cite[(6.2.20)]{PR12}  that 
           there exists a constant $K>0$ such that for all $x, y\in\R$,
          \begin{align}\label{eq:prob}
          &\sup_{s, t\in \R}\left(\mathrm{P}\left\{\mathcal{A}_1(x) \leq s, \mathcal{A}_1(y)\leq t\right\} - 
          \mathrm{P}\left\{\mathcal{A}_1(x) \leq s\right\}\mathrm{P}\left\{\mathcal{A}_1(y)\leq t\right\}  \right)           \leq K\left[\Cov(\mathcal{A}_1(x)\,,
          \mathcal{A}_1(y))\right]^{1/3}
          \end{align}
          (see also \cite[(6.11)]{Pu23}).
          
          Using the fact that certain centered and scaled passage times in the exponential last passage percolation (LPP) converge to the Airy processes, we will appeal to the exponential LPP to study the tail probability of maximum and minimum of the Airy processes. We state here the weak convergence results.
           Let us introduce some notations of the relevant exponential LPP model. 
Consider the last passage percolation on $\mathbb{Z}^2$ with i.i.d. $\rm Exp(1)$ passage times on the vertices.  For $x\in \R$, let $u=u_N(s)= (N-\floor{s(2N)^{2/3}}, N+\floor{s(2N)^{2/3}})\in \mathbb{Z}^2$. Denote by $\cl_r$  the line $\{(x,y) \in \Z^2:x+y=r\}$.
Let $T_{N}(s):=T_{\mathbf{0},u}$ denote the last passage time from $\bo{0}:=(0,0)$ to $u$ (i.e., the maximal total weight of an up/right path connecting $\bo{0}$ and $u$ excluding the last vertex).
Let $T_N^*(s)$ denote the last passage time from $\mathcal{L}_0$ to $u$ (i.e., the maximal weight among all paths that start at some point in $\mathcal{L}_0$ and end at $u$ excluding the last vertex).  We denote by $\Gamma_{u,v}$ the almost surely unique geodesic (i.e, the up/right path with maximal passage time) between two points $u,v \in \Z^2$. $\Gamma_{v}$ will be used to denote the geodesic between $\bo{0}$ and $v$.


The following results on weak convergence of exponential LPP are taken from \cite{BaB24} and \cite{BGZ21}.

\begin{theorem}[{\cite[Theorem 3.8]{BGZ21}}]
    \label{t:lppairy2limit}
    As $N\to \infty$, 
    \begin{equation}
    \label{e:airy2limit}
    \frac{T_{N}(s)-4N}{2^{4/3}N^{1/3}}\Rightarrow \ca_2(s)-s^2,
\end{equation}
where $\Rightarrow$ denotes weak convergence in the topology of uniform convergence on compact sets.
\end{theorem}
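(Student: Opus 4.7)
The plan is to establish weak convergence in the uniform topology on compact sets in two stages: convergence of finite-dimensional distributions, followed by functional tightness.

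For the finite-dimensional distributions, I would exploit the Schur-measure/RSK structure of exponential LPP: the joint law of $(T_N(s_1), \ldots, T_N(s_k))$ at endpoints $u_N(s_1), \ldots, u_N(s_k)$ on the antidiagonal $\cl_{2N}$ admits a Fredholm-determinant representation through an extended Laguerre-type correlation kernel. Under the scaling $u \mapsto (N - \lfloor s(2N)^{2/3}\rfloor,\, N + \lfloor s(2N)^{2/3}\rfloor)$ and $t \mapsto 4N + 2^{4/3}N^{1/3}\xi$, a steepest-descent analysis of this kernel (following Johansson's one-point argument and its multi-point extensions by Prähofer-Spohn and Borodin-Ferrari) shows convergence, with dominating tail estimates, to the extended Airy$_2$ kernel evaluated at the shifted arguments $(\xi + s_i^2,\, s_i)$. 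The $-s^2$ parabolic offset reflects the law-of-large-numbers identity $\E[T_{\bo{0},(N-x,N+x)}] \approx (\sqrt{N-x}+\sqrt{N+x})^2 = 2N + 2\sqrt{N^2-x^2} \approx 4N - x^2/N$ evaluated at $x = s(2N)^{2/3}$, which gives $-s^2 \cdot 2^{4/3}N^{1/3}$ at leading order. Standard Fredholm-determinant continuity then converts kernel convergence into convergence of the marginals of $(T_N(s)-4N)/(2^{4/3}N^{1/3})$ to those of $\ca_2(s) - s^2$.

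For tightness I would apply the Kolmogorov-Chentsov criterion to $F_N(s) := (T_N(s)-4N)/(2^{4/3}N^{1/3})$ on each compact interval $[-M,M]$. The key input is a two-point moment bound
\[ \E\bigl[\,|F_N(s) - F_N(s')|^{p}\,\bigr] \le C_{p,M}\,|s-s'|^{p/2}, \]
uniformly in large $N$ and in $s,s' \in [-M,M]$. This is produced by comparing $T_N(s) - T_N(s')$ to the passage time between two endpoints of $\cl_{2N}$ separated by $O(|s-s'|(2N)^{2/3})$ in the transverse direction; after the KPZ $1{:}2{:}3$ rescaling this difference lives on spatial scale $|s-s'|^{1/2}$, and combining this with the sharp upper- and lower-tail Tracy-Widom-type bounds for one-point LPP fluctuations yields the moment estimate for every $p$. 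Kolmogorov-Chentsov then gives tightness in $C([-M,M])$ for every $M$, which, combined with the finite-dimensional convergence, produces the claimed weak convergence.

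The main obstacle is the tightness. One-point convergence is classical (Johansson 2000), and the extended-kernel saddle-point analysis is routine once the Schur-measure setup is in place; but upgrading two-point kernel asymptotics to a uniform-in-$N$ modulus-of-continuity bound requires sharp upper and lower tail estimates for passage times between essentially arbitrary pairs of points on $\cl_0$ and $\cl_{2N}$. Such estimates have become a standard part of the integrable-probability toolkit (through the work of Basu-Sidoravicius-Sly, Ledoux-Rider, and the local-fluctuation analysis in \cite{BGZ21} itself), so the technical pieces are available; the work is in assembling them carefully enough that the resulting constants $C_{p,M}$ can be taken independent of $N$.
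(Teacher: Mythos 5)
This statement is not proved in the paper at all; it is simply quoted as \cite[Theorem 3.8]{BGZ21}, so there is no internal argument to compare your sketch against — the authors take it as background. That said, your outline is a fair account of how such process-level convergence is established in the literature: finite-dimensional marginals from steepest-descent analysis of the extended Laguerre/Airy kernel, and you correctly trace the parabolic offset to the second-order term in the law-of-large-numbers expansion $(\sqrt{N-x}+\sqrt{N+x})^2 \approx 4N - x^2/N$ evaluated at $x = s(2N)^{2/3}$. The load-bearing step, however, is tightness, and there your sketch is thin. The uniform-in-$N$ increment bound $\E\bigl[|F_N(s)-F_N(s')|^{p}\bigr] \le C_{p,M}|s-s'|^{p/2}$ is asserted rather than derived, and it does not follow by merely stitching together one-point Tracy--Widom tail estimates: the increment $T_N(s)-T_N(s')$ is a \emph{difference} of highly correlated passage times, and bounding its moments at the Brownian scale $|s-s'|^{1/2}$ requires a genuine locally-Brownian comparison for the LPP profile along $\cl_{2N}$ (comparison to stationary LPP \`a la Cator--Pimentel/Bal\'azs--Cator--Sepp\"al\"ainen, or the Brownian-comparison machinery of \cite{BGZ21} itself), with the nontrivial part being that the comparison constants can be taken uniform in $N$. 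Your proposal flags this as ``the main obstacle'' but then defers it to ``the toolkit,'' which is exactly where the theorem's content lives. So the skeleton is right, but it is a sketch with a placeholder at its crux; in the paper's context the correct move is to cite the result, as the authors do.
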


\begin{theorem}[{{\cite[Theorem 1.4]{BaB24}}}]\label{th:weak}
          As $N\to\infty$,
          \begin{align*}
          \frac{T_N^*(s)-4N}{2^{4/3}N^{1/3}} \Rightarrow  2^{1/3}\mathcal{A}_1(2^{-2/3}s).
          \end{align*}
          \end{theorem}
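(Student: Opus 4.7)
My plan is to prove weak convergence by reducing it to (i) finite-dimensional convergence together with (ii) tightness on compact intervals, and to handle (i) via the variational representation
\[
T_N^*(s)=\max_{v\in \cl_0} T_{v,u_N(s)}
\]
so as to piggy-back on the already-known point-to-point asymptotics (Theorem \ref{t:lppairy2limit}). Writing $v=(-r,r)$ for $r\in\Z$, the optimal starting points concentrate at $|r|=\Theta(N^{2/3})$ by transversal fluctuation and curvature estimates: starting points further away pay a deterministic parabolic cost that cannot be compensated by fluctuations of order $N^{1/3}$, so up to a negligible event the supremum may be truncated to a compact window in the rescaled coordinate $q:=r/(2N)^{2/3}$.

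For the finite-dimensional convergence itself, I would use an extension of Theorem \ref{t:lppairy2limit} that gives joint convergence in the rescaled starting point $q$ and endpoint parameter $s$, so that the rescaled $T_{v,u_N(s)}$ converges to an Airy sheet $\cs(q,s)$ with an added parabolic correction of the form $-(q-s)^2$ (up to explicit scale factors determined by the $(2N)^{2/3}$ and $2^{4/3}N^{1/3}$ scalings). Taking the supremum in $q$ and applying the identity
\[
\sup_{q\in\R}\bigl[\cs(q,s)-(q-s)^2\bigr]\stackrel{d}{=}2^{1/3}\ca_1(2^{-2/3}s),
\]
which is the standard ``flat initial condition'' identification of the Airy$_1$ marginal of the directed landscape, yields the limit with the stated constants. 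The factors $2^{1/3}$ and $2^{-2/3}$ arise precisely from converting the point-based KPZ scaling used in Theorem \ref{t:lppairy2limit} into the line-based scaling induced by $\cl_0$.

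For tightness, I would establish a Kolmogorov-type modulus-of-continuity estimate after rescaling: letting $\wt T_N^*$ denote the normalized passage time, one shows
\[
\E\bigl[|\wt T_N^*(s)-\wt T_N^*(s')|^p\bigr]\le C|s-s'|^{p/2-\eta}
\]
for some $p>2$, uniformly in $N$. This follows from standard upper- and lower-tail moderate-deviation estimates for exponential LPP passage times, combined with the elementary inequality $|\max_i a_i-\max_i b_i|\le \max_i|a_i-b_i|$ to transfer bounds from point-to-point to point-to-line times. The main obstacle is step (i): upgrading the pointwise steepest-descent argument behind Theorem \ref{t:lppairy2limit} to \emph{joint} convergence uniform in the starting point $q$, and establishing a quantitative tail control on the maximizer that legitimizes truncating the supremum to a compact window. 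This is the technical heart of the proof and is where the recent flat-geometry machinery of \cite{BaB24} enters; once available, assembling the two ingredients is routine.
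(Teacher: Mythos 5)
The paper does not prove Theorem~\ref{th:weak}: it is quoted verbatim from \cite[Theorem~1.4]{BaB24}, with no argument given in the present paper. So there is no in-paper proof to compare against. That said, your sketch is a reasonable high-level outline, and the route you describe --- the variational representation $T_N^*(s)=\max_{v\in\cl_0}T_{v,u_N(s)}$, truncation of the optimizing starting points to a compact window of width $\Theta(N^{2/3})$ by transversal-fluctuation/curvature input, then passage to a limiting variational problem over the Airy sheet and use of $\sup_{q\in\R}\bigl[\cs(q,s)-(q-s)^2\bigr]\stackrel{d}{=}2^{1/3}\ca_1(2^{-2/3}s)$ --- is the standard geometric way to obtain such functional limit theorems and is consistent with the style of the cited reference.

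Two places where the sketch is leaner than it should be. First, the variational identity you invoke is correct as a \emph{process} statement only once you have the full Airy sheet (equivalently, directed-landscape convergence for exponential LPP) with joint convergence in the starting-point variable $q$; as you note yourself, that is the technical heart, and your proposal essentially assumes it. It is worth knowing that finite-dimensional convergence of the flat/line-to-point LPP profile to the Airy$_1$ process was established much earlier by Fredholm determinant methods (Sasamoto; Borodin--Ferrari--Pr\"ahofer--Sasamoto), so one can build the f.d.d.\ step on that classical input and avoid needing the Airy sheet at all; only tightness would then remain. Also, the scaling constants $2^{1/3}$ and $2^{-2/3}$ do not come from ``converting point-based to line-based scaling'' --- the KPZ exponents are the same for both geometries --- they are intrinsic to the normalization of $\ca_1$ relative to $\ca_2$ in the variational identity.

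Second, the tightness step is under-specified. The inequality $|\max_i a_i-\max_i b_i|\le\max_i|a_i-b_i|$ only reduces $|\wt T_N^*(s)-\wt T_N^*(s')|$ to a \emph{maximum} over roughly $N^{2/3}$ starting points of the increment $|\wt T_N(q,s)-\wt T_N(q,s')|$; bounding the $p$-th moment of that maximum uniformly in $N$ requires a chaining or discretization-plus-union-bound argument, not merely a pointwise moderate-deviation estimate applied to each term. A local Brownian-comparison input (e.g.\ in the spirit of Basu--Ganguly--Zhang or Calvert--Hammond--Hegde) supplies the missing quantitative modulus of continuity, but the reduction is less elementary than your wording suggests.
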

          


\section{Macroscopic Hausdorff dimension of the Airy$_1$ process} \label{sec:th1}

We first recall that the one-point distribution of the Airy$_1$ process is given by
\begin{align*}
\mathrm{P}\{\mathcal{A}_1(0)\leq x\} =F_1(2x), \quad x\in\R,
\end{align*}
where $F_1$ denotes the GOE Tracy-Widom distribution. By the asymptotic behavior of the GOE Tracy-Widom distribution (see \cite{BBD08, DuV13}), we see that as $x\to+\infty$
\begin{align}
          \mathrm{P}\{\mathcal{A}_1(0)> x\}&= \e^{-(\frac{4\sqrt{2}}{3}+o(1))x^{3/2}}, \label{eq:uppertail}\\
          \mathrm{P}\{\mathcal{A}_1(0)< -x\}&= \e^{-(\frac13+o(1))x^{3}}.\label{eq:lowertail2}
\end{align}
Moreover, according to  \cite[Proposition 6.1]{Pu23}, we have as $x\to+\infty$,
\begin{align}\label{eq:max-tail}
            \mathrm{P}\left\{\max_{s\in [0,1]}\mathcal{A}_1(s)> x\right\}&= \e^{-(\frac{4\sqrt{2}}{3}+o(1))x^{3/2}}.
\end{align}
Furthermore, Basu et al. \cite{BBF23} have shown that the covariance of the Airy$_1$ process 
 decays super-exponentially by showing that there exists $c'>0$ such that for all $t>1$,
 \begin{align}\label{eq:cov}
 \Cov(\mathcal{A}_1(t)\,, \mathcal{A}_1(0))\leq \e^{c't^2}\e^{-\frac43t^3},
 \end{align}
(see \cite[Theorem 1.1]{BBF23}).

We first establish the macroscopic Hausdorff dimension of the upper level sets of the Airy$_1$ process in Theorem \ref{th:Hausdorff}. 

\begin{proof}[{Proof of \eqref{eq:dim}}]
          Theorems 4.1 and 4.7 of \cite{KKX17} provide general macroscopic Hausdorff dimension estimates.
           First, since the Airy$_1$ process is stationary, 
          we can combine \cite[Theorem 4.1]{KKX17} with \eqref{eq:max-tail} to deduce that for $\gamma\in (0,1)$
           \begin{align}\label{eq:dim-upper}
           {\rm Dim_{H}}\left\{t>\e: \mathcal{A}_1(t) >\frac\gamma2\left((3\log t)/2\right)^{2/3}\right\}\leq 1-\gamma^{3/2}, \quad \text{a.s.}
           \end{align}

           Theorem 4.7 of \cite{KKX17} cannot be applied directly to give a lower bound on the macroscopic Hausdorff dimension since we are not 
           able to construct a coupling process for the Airy$_1$ process which satisfies the conditions of Theorem 4.7 of \cite{KKX17}. 
           Instead, we will use the probability inequalities in Section \ref{sec:pre} for the Airy$_1$ process to show that the upper level set is $\theta$-thick and 
           then derive a lower bound for the Hausdorff dimension by Proposition \ref{prop:thick}.
            
            Fix  $\gamma\in (0,1)$. We are aiming  to show that 
           \begin{align} \label{eq:dim-lower}
           {\rm Dim_{H}}\left\{t>\e: \mathcal{A}_1(t) >\frac\gamma2\left((3\log t)/2\right)^{2/3}\right\}\geq 1-\gamma^{3/2}, \quad \text{a.s.}
           \end{align} 
         We will prove that 
                              \begin{align*}
                      {\rm Dim_{H}}\left\{t>\e: \mathcal{A}_1(t) >\frac\gamma2\left((3\log t)/2\right)^{2/3}\right\} \geq 1-\theta, \quad
           \text{a.s. for all $\theta\in(\gamma^{3/2}, 1)$}.
           \end{align*} 
           Choose and fix $\theta\in (\gamma^{3/2}, 1)$. We also choose and fix sufficiently small positive constants $\delta$ and $\eta$ such that
           \begin{align}\label{exponent2}
           \theta-\delta-(1+\frac{3}{4\sqrt{2}}\eta)\gamma^{3/2}>0. 
           \end{align}

           Denote 
           \begin{align}\label{x_in}
           x_{i, n}= \e^{n}+(i-1)\e^{n\theta}
           \end{align}
            for $i=1, \ldots, L_n:=\floor{(\e-1)\e^{n(1-\theta)}+1}$.  Recall the set $\Pi_n(\theta)$ defined in \eqref{eq:Pi} and we write
           \begin{align*}
           \Pi_n(\theta)= \{x_{i, n}: i=1, \ldots, L_n\}.
           \end{align*}
           For $i=1, \ldots, L_n$, we define 
           \begin{align}\label{z_in}
           z_{j, n}(i)= x_{i, n}+(j-1)\e^{n\delta}
           \end{align}
            where $j=1, \ldots, \ell_n(i)$ 
           with 
           \begin{align}
           \ell_n(i):=\floor{\e^{n(\theta-\delta)}}+1\in (\e^{n(\theta-\delta)}, 2\e^{n(\theta-\delta)}). \label{eq:ln}
           \end{align}  
           Since $\log s \leq n+1$ for all $s\in [x_{i,n}, x_{i,n}+\e^{n\theta})$ with $i=1, \ldots, L_n$, we write
           \begin{align}\label{eq:sum}
           \P\left\{\sup_{s\in [x_{i,n}, x_{i,n}+\e^{n\theta})}\frac{\mathcal{A}_1(s)}{(\log s)^{2/3}} \leq 
           \frac\gamma2\left(3/2\right)^{2/3}\right\} 
           &\leq            \P\left\{\sup_{s\in [x_{i,n}, x_{i,n}+\e^{n\theta})}\mathcal{A}_1(s)\leq 
           \frac\gamma2\left(3(n+1)/2\right)^{2/3}\right\} \nonumber\\
           & \leq             \P\left\{\max_{1\leq j\leq \ell_n(i)}\mathcal{A}_1(z_{j,n}(i))\leq 
           \frac\gamma2\left(3(n+1)/2\right)^{2/3}\right\}\nonumber\\
           &=           \mathcal{P}_{i,n,1}+           \mathcal{P}_{i,n,2},
                      \end{align}
           where
           \begin{align*}
           \mathcal{P}_{i,n,1}&=             \P\left\{\max_{1\leq j\leq \ell_n(i)}\mathcal{A}_1(z_{j,n}(i))\leq 
           \frac\gamma2\left(3(n+1)/2\right)^{2/3}\right\} - \prod_{j=1}^{\ell_n(i)}\P\left\{\mathcal{A}_1(z_{j,n}(i))\leq 
           \frac\gamma2\left(3(n+1)/2\right)^{2/3}\right\},\\
           \mathcal{P}_{i,n,2}&=\prod_{j=1}^{\ell_n(i)}\P\left\{\mathcal{A}_1(z_{j,n}(i))\leq 
           \frac\gamma2\left(3(n+1)/2\right)^{2/3}\right\}.
           \end{align*}
           Since the Airy$_1$ process is associated (see \cite[Theorem 1.2]{Pu23}), it follows from \eqref{FKG1} and \eqref{eq:prob} that
           \begin{align}\label{eq:dif}
            \mathcal{P}_{i,n,1}           &\leq K \sum_{1\leq j<k\leq  \ell_n(i)}\left[\Cov(\mathcal{A}_1(z_{j,n}(i))\,, \mathcal{A}_1(z_{k,n}(i)))\right]^{1/3}
           \nonumber\\
           &\leq K_1 \sum_{1\leq j<k\leq  \ell_n(i)} \e^{-\frac13|z_{j,n}(i)-z_{k,n}(i))|^3} \leq K_1  \ell_n(i)^2 \e^{-\frac13\e^{3n\delta}}\nonumber\\
           &\leq 4K_1\e^{2n(\theta-\delta)-\frac13\e^{3n\delta}},
           \end{align}
           where the second inequality holds by \eqref{eq:cov} and the last inequality is due to \eqref{eq:ln}.

           In order to estimate $\mathcal{P}_{i,n,2}$, we first notice from \eqref{eq:uppertail}
           that for all sufficiently large $n$, 
           \begin{align*}
           \P\left\{\mathcal{A}_1(0)\leq 
           \frac\gamma2\left(3(n+1)/2\right)^{2/3}\right\} &= 1-          \P\left\{\mathcal{A}_1(0)>
           \frac\gamma2\left(3(n+1)/2\right)^{2/3}\right\}\\
           &\leq 1-\e^{-(\frac{4\sqrt{2}}{3}+\eta)(\frac{\gamma}{2})^{3/2}\frac{3(n+1)}{2}} = 1- \e^{-(1+\frac{3\eta}{4\sqrt{2}})\gamma^{3/2}(n+1)},
           \end{align*}
           where $\eta$ is the fixed positive number satisfying \eqref{exponent2}. By stationarity of the Airy$_1$ process, 
           \begin{align}\label{eq:tail3}
           \mathcal{P}_{i,n,2}&= \left(\P\left\{\mathcal{A}_1(0)\leq 
           \frac\gamma2\left(3(n+1)/2\right)^{2/3}\right\} \right)^{\ell_n(i)}\nonumber\\
           &\leq  \left(1- \e^{-(1+\frac{3\eta}{4\sqrt{2}})\gamma^{3/2}(n+1)} \right)^{\ell_n(i)} \leq 
           \e^{-\ell_n(i)\e^{-(1+\frac{3\eta}{4\sqrt{2}})\gamma^{3/2}(n+1)} } \nonumber\\
           &\leq            \e^{-\e^{n(\theta-\delta)-(1+\frac{3\eta}{4\sqrt{2}})\gamma^{3/2}(n+1)} },
                      \end{align}
          where the second inequality holds by the inequality $1-x\leq \e^{-x}$ for all $x\geq0$ 
          and the third inequality by \eqref{eq:ln}.

           Now, using the fact $L_n\leq \e^{1+n(1-\theta)}$, we see from \eqref{eq:sum}, \eqref{eq:dif} and \eqref{eq:tail3} that
           \begin{align*}
           &\sum_{i=1}^{L_n}\P\left\{\sup_{s\in [x_{i,n}, x_{i,n}+\e^{n\theta})}\frac{\mathcal{A}_1(s)}{(\log s)^{2/3}} \leq 
           \frac\gamma2\left(3/2\right)^{2/3}\right\} \\
           &\qquad\qquad \qquad \qquad \leq 4K_1 \e^{1+n(1-\theta)+2n(\theta-\delta)-\frac13\e^{3n\delta}}
           + \e^{1+n(1-\theta)-\e^{n(\theta-\delta)-(1+\frac{3\eta}{4\sqrt{2}})\gamma^{3/2}(n+1)} }.
\end{align*}
           In light of \eqref{exponent2},  it follows that 
           \begin{align*}
           \sum_{n=1}^{\infty}\sum_{i=1}^{L_n}\P\left\{\sup_{s\in [x_{i,n}, x_{i,n}+\e^{n\theta})}\frac{\mathcal{A}_1(s)}{(\log s)^{2/3}} \leq 
           \frac\gamma2\left(3/2\right)^{2/3}\right\} <\infty.
           \end{align*}
          Borel-Cantelli's lemma ensures that almost surely, for all but a finite number of integers $n\geq1$
          \begin{align*}
          \sup_{s\in [x_{i,n}, x_{i,n}+\e^{n\theta})}\frac{\mathcal{A}_1(s)}{(\log s)^{2/3}} >
           \frac\gamma2\left(\frac{3}{2}\right)^{2/3}, \quad \text{for all $1\leq i\leq L_n$}.
          \end{align*}
          This implies that almost surely for all sufficiently large $n$, 
          \begin{align*}
          \mathcal{U}(\gamma):=\left\{t>\e: \mathcal{A}_1(t)> \frac{\gamma}{2}((3\log t)/2)^{2/3} \right\} \cap 
          [x, x+\e^{n\theta}) \neq \emptyset, \quad \text{for all $x\in \Pi_n(\theta)$}.
          \end{align*}
          In other words, the upper level set $\mathcal{U}(\gamma)$
           is $\theta$-thick almost surely (see Definition \ref{def:thick}). 
           Therefore, we conclude from Proposition \ref{prop:thick} that for $\theta \in (\gamma^{3/2}, 1)$
          \begin{align*}
          {\rm Dim_{H}}\left\{t>\e: \mathcal{A}_1(t)> \frac{\gamma}{2}((3\log t)/2)^{2/3} \right\} \geq1-\theta, \quad \text{a.s.}
          \end{align*}
          We let $\theta\downarrow\gamma^{3/2}$ to complete the proof of \eqref{eq:dim-lower}. 
          
          Therefore, the equality in \eqref{eq:dim} follows from \eqref{eq:dim-upper} and \eqref{eq:dim-lower}.
\end{proof}

We proceed to establish the macroscopic Hausdorff dimension of the lower level sets of the Airy$_1$ process in Theorem \ref{th:Hausdorff2}.  The proof of \eqref{eq:dimL1} is similar to that of \eqref{eq:dim}. In order to give an upper bound on the Hausdorff dimension of the lower level sets, we need to estimate the lower tail probability of the minimum of the Airy$_1$ process. 

\begin{lemma}\label{lem:min}
          For $\varepsilon\in(0,1)$, there exists a constant $C>0$ (depending on $\varepsilon$) such that for all sufficiently
          large $x$,
          \begin{align}
          \P\left\{\min_{s\in[0, 1]}\mathcal{A}_1(s) \leq -x\right\}\leq C\e^{-\frac13(1-\varepsilon)x^3}.
          \end{align}
\end{lemma}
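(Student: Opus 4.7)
The plan is a discretization-plus-union-bound argument combined with a modulus-of-continuity estimate for the Airy$_1$ process. Fix $\varepsilon\in(0,1)$, choose $\eta>0$ sufficiently small in terms of $\varepsilon$, and for $x\geq 1$ set $M:=\lceil x^6\rceil$ with grid points $t_k:=k/M$ for $k=0,1,\ldots,M$. First I would decompose
\begin{align*}
\left\{\min_{s\in[0,1]}\mathcal{A}_1(s)\leq -x\right\}\ \subseteq\ \left\{\min_{0\leq k\leq M}\mathcal{A}_1(t_k)\leq -(1-\eta)x\right\}\ \cup\ \{\mathrm{osc}_M>\eta x\},
\end{align*}
where $\mathrm{osc}_M:=\max_{0\leq k<M}\sup_{s\in[t_k,t_{k+1}]}|\mathcal{A}_1(s)-\mathcal{A}_1(t_k)|$, and bound the two events separately.

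For the grid (one-point) term, stationarity of $\mathcal{A}_1$, a union bound, and the one-point lower tail \eqref{eq:lowertail2} give, for any $\eta'>0$ and $x$ large enough,
\begin{align*}
\P\!\left\{\min_{0\leq k\leq M}\mathcal{A}_1(t_k)\leq -(1-\eta)x\right\}\leq (M+1)\,\P\{\mathcal{A}_1(0)\leq -(1-\eta)x\}\leq 2x^6\,\e^{-\frac{1}{3}(1-\eta')(1-\eta)^3 x^3}.
\end{align*}
Choosing $\eta,\eta'>0$ small enough so that $(1-\eta')(1-\eta)^3\geq 1-\varepsilon/2$, the polynomial prefactor $2x^6$ is absorbed into the exponential for large $x$, giving a bound of the form $\tfrac{1}{2}C\,\e^{-(1-\varepsilon)x^3/3}$.

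For the oscillation term, the plan is to establish a quantitative modulus-of-continuity estimate
\begin{align*}
\P\!\left\{\sup_{0\leq s\leq \delta}|\mathcal{A}_1(s)-\mathcal{A}_1(0)|>u\right\}\leq C_1\,\e^{-c_1 u^2/\delta},\qquad u\geq 1,\ \delta\in(0,1),
\end{align*}
reflecting the local Brownianness of $\mathcal{A}_1$. Applied with $\delta=1/M$ and $u=\eta x$, together with stationarity and a union bound over the $M$ subintervals, this yields $\P\{\mathrm{osc}_M>\eta x\}\leq MC_1\,\e^{-c_1\eta^2 x^{8}}$, which is negligible compared to $\e^{-x^3/3}$ for large $x$. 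Adding the two bounds completes the proof. The main obstacle is producing the quantitative modulus estimate above; the natural route is to transfer the analogous (known) modulus-of-continuity bound for the rescaled line-to-point LPP process $T_N^*$ through the weak convergence in Theorem~\ref{th:weak}, using the LPP increment and transversal-fluctuation estimates available in \cite{BaB24} and the references therein. Given such a bound, the remainder of the argument is routine from \eqref{eq:lowertail2}.
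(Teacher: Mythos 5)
Your decomposition into a grid minimum plus an oscillation term is a natural alternative route, and the bookkeeping around the union bound and the one-point tail \eqref{eq:lowertail2} is correct: a polynomial prefactor of order $x^6$ is absorbed into the exponential once $\eta,\eta'$ are chosen so that $(1-\eta')(1-\eta)^3>1-\varepsilon$. However, the argument hinges on the step you yourself flag as the ``main obstacle,'' and that step is a genuine gap rather than a routine transfer. The quantitative modulus-of-continuity bound
\[
\P\left\{\sup_{0\le s\le\delta}|\mathcal{A}_1(s)-\mathcal{A}_1(0)|>u\right\}\le C_1\,\e^{-c_1u^2/\delta},\qquad u\ge1,\ \delta\in(0,1),
\]
is neither established in the paper nor, as far as I can see, available in the references in this quantitative form for the Airy$_1$ process. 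It also cannot be taken on faith: the Brownian approximation for KPZ-type increments is valid in a moderate-deviation window, and for $u$ much larger than that window the increment tail crosses over toward the heavier one-point tails of order $\e^{-cu^{3/2}}$; one would have to verify that no exponent is lost precisely in the regime $u=\eta x$, $\delta=x^{-6}$. If the tail there were only $\e^{-cu^{3/2}}=\e^{-c\eta^{3/2}x^{3/2}}$, the union bound over $x^6$ subintervals would swamp the target $\e^{-x^3/3}$ and the proof would fail. Producing the sharp Gaussian-rate bound would require essentially the same restricted-passage-time and transversal-fluctuation machinery from exponential LPP that the paper already relies on. Moreover, transferring a shrinking-window modulus estimate through Theorem~\ref{th:weak} is itself delicate: weak convergence in the topology of uniform convergence on compacts controls the increment over any \emph{fixed} window, but not uniformly over $\delta\downarrow0$, so one would additionally need pre-limit modulus bounds for $T_N^*$ uniform in $N$.

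The paper's proof sidesteps all of this. It invokes a ready-made line-to-point LPP estimate, Lemma~2.7 of BaB24 (Lemma~\ref{lem:LPP} here), which bounds $\P\{\min_{u\in I^\delta}T_{\mathcal{L}_0,u}-4N \leq -x\,2^{4/3}N^{1/3}\}\leq\e^{-\frac16(1-\varepsilon)x^3}$ over a \emph{fixed} macroscopic interval $I^\delta$ on $\mathcal{L}_{2N}$, and transfers this single estimate to $\mathcal{A}_1$ via Theorem~\ref{th:weak}; after the $2^{1/3}$ rescaling the exponent becomes $\frac13(1-\varepsilon)x^3$. The interval $[0,1]$ is then tiled by $O(1/\delta)$ translates of a fixed-length interval using stationarity. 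The key structural difference from your plan is that $\delta$ is a constant depending only on $\varepsilon$ and does not shrink with $x$, so no modulus-of-continuity analysis on shrinking windows is needed at all.
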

\begin{proof}
          Fix $\varepsilon\in (0, 1)$. 
          By Lemma \ref{lem:LPP} below, there exists $\delta>0$  such that for any $x$ sufficiently large (depending on $\varepsilon$), for $N$ sufficiently large (depending on $x, \varepsilon$)
          \begin{align*}
          \P\left\{\min_{u\in I^\delta} T_{\mathcal{L}_0, u}-4N \leq -x 2^{4/3}N^{1/3}\right\} \leq \e^{-\frac16(1-\varepsilon)x^3}.
          \end{align*}
                    Since $\min_{s\in [-\delta/8, \delta/8]}T_N^*(s) \geq \min_{u\in I^\delta} T_{\mathcal{L}_0, u}$, it follows that
          \begin{align*}
                \P\left\{\min_{s\in [-\delta/8, \delta/8]}T_N^*(s)-4N \leq -x 2^{4/3}N^{1/3}\right\} \leq 
                  \P\left\{\min_{u\in I^\delta} T_{\mathcal{L}_0, u}-4N \leq -x 2^{4/3}N^{1/3}\right\} 
                  \leq \e^{-\frac16(1-\varepsilon)x^3}
          \end{align*}
                    for any $x$ sufficiently large (depending on $\varepsilon$) and for $N$ sufficiently large (depending on $x, \varepsilon$).           By Theorem \ref{th:weak} and the continuity of the mapping $f\mapsto \min_{s\in[-\delta/8, \delta/8]}f(s)$ in the topology of uniform convergence, we obtain that
          \begin{align*}
          \P\left\{2^{1/3}\min_{s\in [-\delta/8, \delta/8]}\mathcal{A}_1(2^{-2/3}s) \leq -x\right\} \leq \e^{-\frac16(1-\varepsilon)x^3}
          \end{align*}
                    for any $x$ sufficiently large (depending on $\varepsilon$). Using stationarity of the Airy$_1$ process, we see
           that for any $x$ sufficiently large (depending on $\varepsilon$),
           \begin{align*}
          \P\left\{\min_{s\in [0, \delta/4]}\mathcal{A}_1(2^{-2/3}s) \leq -x\right\} \leq \e^{-\frac13(1-\varepsilon)x^3}.
          \end{align*}
          If $2^{-2/3}\delta/4\geq1$, then 
          \begin{align*}
          \P\left\{\min_{s\in [0, 1]}\mathcal{A}_1(s) \leq -x\right\} \leq
          \P\left\{\min_{s\in [0, \delta/4]}\mathcal{A}_1(2^{-2/3}s) \leq -x\right\} \leq \e^{-\frac13(1-\varepsilon)x^3}.
          \end{align*}
          If $2^{-2/3}\delta/4<1$, then using again stationarity of the Airy$_1$ process,
          \begin{align*}
          \P\left\{\min_{s\in [0, 1]}\mathcal{A}_1(s) \leq -x\right\}& \leq
          \P\left\{\min_{s\in [0, \frac{\delta}{2^{2/3}4}\cdot (\floor{2^{2/3}4/\delta}+1)]}\mathcal{A}_1(s) \leq -x\right\} \\
          &\leq 
          (\floor{2^{2/3}4/\delta}+1)\P\left\{\min_{s\in [0, \frac{\delta}{4} ]}\mathcal{A}_1(2^{-2/3}s) \leq -x\right\}\\
          &
          \leq (\floor{2^{2/3}4/\delta}+1)\e^{-\frac13(1-\varepsilon)x^3}.
          \end{align*}
          Therefore, we conclude that there exists $C>0$ (depending on $\varepsilon$) such that for 
          any $x$ sufficiently large (depending on $\varepsilon$)
          \begin{align*}
          \P\left\{\min_{s\in[0, 1]}\mathcal{A}_1(s) \leq -x\right\}\leq C\e^{-\frac13(1-\varepsilon)x^3},
          \end{align*}
          which proves Lemma \ref{lem:min}.
\end{proof}

We are now ready to prove \eqref{eq:dimL1}. We first give an upper bound on the Hausdorff dimension of the lower level set of the Airy$_1$ process.

\begin{proof}[Proof of \eqref{eq:dimL1}: upper bound]
          The proof follows from a modification of the proof of \cite[Theorem 4.1]{KKX17} by using Lemma \ref{lem:min}.   
          We include the details for the sake of completeness. 
          Fix $\gamma\in(0,1)$ and recall that 
          \begin{align*}
          \mathcal{L}_1({\gamma})=\{t>\e: \mathcal{A}_1(t)< -\gamma(3\log t)^{1/3}\}.
          \end{align*} 
          Fix $\varepsilon\in (0, 1)$. According to Lemma \ref{lem:min}, 
          there exits a positive constant $C_\varepsilon$ such that for all sufficiently large $m$
          \begin{align*}
          \P\{\mathcal{L}_1(\gamma)\cap [m, m+1)\neq \emptyset\} &\leq \P\left\{\inf_{s\in [m,m+1)}\mathcal{A}_1(s) < -\gamma(3\log m)^{1/3} \right\}\\
          &= \P\left\{\min_{s\in [0,1]}\mathcal{A}_1(s) < -\gamma(3\log m)^{1/3} \right\}\\
          &\leq \frac{C_\varepsilon}{m^{(1-\varepsilon)\gamma^3}}. 
          \end{align*}  
          Therefore, we can cover $\mathcal{L}_1(\gamma)\cap [\e^n, \e^{n+1})$ by intervals of length $1$ to see that for all $\rho>0$ and sufficiently large $n$, 
          \begin{align*}
          \E\left[\nu_\rho^n(\mathcal{L}_1(\gamma))\right]&\leq \sum_{\substack{m\in\mathbb{Z}_+\\ [m,m+1)\subset [\e^n, \e^{n+1})}}\e^{-n\rho}
          \P\{\mathcal{L}_1(\gamma)\cap [m, m+1)\neq \emptyset\}\\
          &\leq C_\varepsilon \e^{-n\rho} \sum_{\substack{m\in\mathbb{Z}_+\\ [m,m+1)\subset [\e^n, \e^{n+1})}}m^{-(1-\varepsilon)\gamma^3}\\
          &\leq C_\varepsilon \e^{-n\rho} \e^{-n(1-\varepsilon)\gamma^3}\e^{n+1}= C_\varepsilon
          \e^{-n(\rho+(1-\varepsilon)\gamma^3-1)+1}.
          \end{align*} 
          Thus, 
          \begin{align*}
          \E\left[\sum_{n=0}^\infty\nu_\rho^n(\cl_1(\gamma)) \right] <\infty, \quad \text{if $\rho> 1-(1-\varepsilon)\gamma^3$}.
          \end{align*}  
          This proves that ${\rm Dim_{H}}(\cl_1(\gamma) )\leq \rho$ a.s. for all $\rho >1-(1-\varepsilon)\gamma^3$. Send $\rho\downarrow 1-(1-\varepsilon)\gamma^3$    and then $\varepsilon \downarrow 0$ to deduce that  ${\rm Dim_{H}}(\cl_1(\gamma)) \leq 1-\gamma^3$ a.s. This completes the proof of the upper bound in \eqref{eq:dimL1}.
\end{proof}

We next give a lower bound on the Hausdorff dimension of the lower level set of the Airy$_1$ process.

\begin{proof}[Proof of \eqref{eq:dimL1}: lower bound]
           Fix $\gamma\in (0, 1)$.
          We will prove that 
                              \begin{align*}
           {\rm Dim_{H}}\left\{t>\e: \mathcal{A}_1(t) <-\gamma\left(3\log t\right)^{1/3}\right\}\geq 1-\theta, \quad
           \text{a.s. for all $\theta\in(\gamma^3, 1)$}.
           \end{align*} 
           Choose and fix $\theta\in (\gamma^3, 1)$. We also choose and fixed sufficiently small positive constants $\delta$ and $\eta$ such that
           \begin{align}\label{exponent3}
           \theta-\delta-(1+3\eta)\gamma^3>0. 
           \end{align}

           We define the points $x_{i,n}$ and $z_{j, n}(i)$ in the same way as in \eqref{x_in} and \eqref{z_in} with $\theta$ and
           $\delta$
           satisfying the condition in \eqref{exponent3}. 
                                 For $i=1, \ldots, L_n$, 
           \begin{align}\label{eq:sum2}
           \P\left\{\inf_{s\in [x_{i,n}, x_{i,n}+\e^{n\theta})}\frac{\mathcal{A}_1(s)}{(\log s)^{1/3}} \geq 
           -\gamma3^{1/3}\right\} 
           &\leq            \P\left\{\inf_{s\in [x_{i,n}, x_{i,n}+\e^{n\theta})}\mathcal{A}_1(s)\geq 
           - \gamma\left(3n\right)^{1/3}\right\} \nonumber\\
           & \leq             \P\left\{\min_{1\leq j\leq \ell_n(i)}\mathcal{A}_1(z_{j,n}(i))\geq 
           - \left(3\gamma n\right)^{1/3}\right\}\nonumber\\
           &=   \tilde{\mathcal{P}}_{i,n,1} +    \tilde{\mathcal{P}}_{i,n,2},
           \end{align}
           where
           \begin{align*}
               \tilde{\mathcal{P}}_{i,n,1}&=      \P\left\{\min_{1\leq j\leq \ell_n(i)}\mathcal{A}_1(z_{j,n}(i))\geq 
           - \left(3\gamma n\right)^{1/3}\right\} - \prod_{j=1}^{\ell_n(i)}\P\left\{\mathcal{A}_1(z_{j,n}(i))\geq 
           - \left(3\gamma n\right)^{1/3}\right\},\nonumber \\
           \tilde{\mathcal{P}}_{i,n,2}&= \prod_{j=1}^{\ell_n(i)}\P\left\{\mathcal{A}_1(z_{j,n}(i))\geq 
           - \left(3\gamma n\right)^{1/3}\right\}.
           \end{align*}

           Because the Airy$_1$ process is associated and its the one-point distribution
            has a continuous probability density function,  we derive from \eqref{FKG2} that for $i=1, \ldots, L_n$
           \begin{align*}
            \tilde{\mathcal{P}}_{i,n,1} &\leq \sum_{1\leq j<k\leq \ell_n(i)}
                       \bigg( \P\left\{\mathcal{A}_1(z_{j,n}(i))\leq 
           -\gamma\left(3 n\right)^{1/3}, \mathcal{A}_1(z_{k,n}(i))\leq 
           -\gamma\left(3 n\right)^{1/3}\right\} \\
           &\qquad \qquad\qquad\qquad \qquad - \P\left\{\mathcal{A}_1(z_{j,n}(i))\leq 
           -\gamma\left(3 n\right)^{1/3}\right\}  \P\left\{\mathcal{A}_1(z_{k,n}(i))\leq 
           -\gamma\left(3 n\right)^{1/3}\right\} \bigg)\\
           &\qquad \leq K \sum_{1\leq j<k\leq  \ell_n(i)}\left[\Cov(\mathcal{A}_1(z_{j,n}(i))\,, \mathcal{A}_1(z_{k,n}(i)))\right]^{1/3},
           \end{align*}
           where the second inequality follows from \eqref{eq:prob}. 
           By \eqref{eq:cov} and the fact that $|z_{j,n}(i)-z_{k,n}(i)|\geq \e^{n\delta}$ for all $1\leq j<k\leq  \ell_n(i)$,
           there exists a constant $K_2>0$ such that for $i=1, \ldots, L_n$
           \begin{align}\label{eq:dif2}
           \tilde{\mathcal{P}}_{i,n,1} & \leq K_2\sum_{1\leq j<k\leq  \ell_n(i)} \e^{-\frac13|z_{j,n}(i)-z_{k,n}(i))|^3} \nonumber\\
           &\leq  K_2 \ell_n(i)^2 \e^{-\frac13\e^{3n\delta}}
           \leq 4K_2 \e^{2n(\theta-\delta)-\frac13\e^{3n\delta}},
           \end{align}
           where the last inequality holds by \eqref{eq:ln}.
           To estimate $\tilde{\mathcal{P}}_{i,n,2}$, we first see from that \eqref{eq:lowertail2}
           for all sufficiently large $n$, 
           \begin{align*}
           \P\left\{\mathcal{A}_1(0)\geq 
- \gamma\left(3n\right)^{1/3}\right\}  =         1-   \P\left\{\mathcal{A}_1(0)<
- \gamma\left(3n\right)^{1/3}\right\}\leq 1-\e^{-(1+3\eta)n\gamma^3},
           \end{align*}
           where $\eta$ is a fixed positive number satisfying \eqref{exponent3}.
           Then, by stationarity of the Airy$_1$ process,  for $i=1, \ldots, L_n$,
           \begin{align}\label{eq:tail4}
           \tilde{\mathcal{P}}_{i,n,2} &=\left(\P\left\{\mathcal{A}_1(0)\geq 
           - \gamma\left(3 n\right)^{1/3}\right\}\right)^{\ell_n(i)} \nonumber\\
           &\leq \left(1-\e^{-(1+3\eta)n\gamma^3}\right)^{\ell_n(i)}
           \leq \e^{-\ell_n(i)\e^{-(1+3\eta)\gamma^3 n}} \leq 
            \e^{-\e^{n(\theta-\delta)-(1+3\eta)\gamma^3 n}},
           \end{align}
           where the second inequality holds by the inequality $1-x\leq \e^{-x}$ for all $x\geq0$ 
          and the third inequality by \eqref{eq:ln}.
          
           Because  $L_n\leq \e^{1+n(1-\theta)}$, we deduce from 
            \eqref{eq:sum2}, \eqref{eq:dif2} and \eqref{eq:tail4} that
           \begin{align*}
           &\sum_{i=1}^{L_n}\P\left\{\inf_{s\in [x_{i,n}, x_{i,n}+\e^{n\theta})}\frac{\mathcal{A}_1(s)}{(\log x)^{1/3}} \geq 
           -\gamma3^{1/3}\right\} \\
           &\qquad \qquad \qquad \qquad\qquad\qquad
           \leq  4K_2 \e^{1+n(1-\theta)+2n(\theta-\delta)-\frac13\e^{3n\delta}} + \e^{1+n(1-\theta)-\e^{n(\theta-\delta)-(1+3\eta)\gamma^3 n}},
           \end{align*}
           which implies by \eqref{exponent3} that
                      \begin{align*}
           \sum_{n=1}^{\infty}\sum_{i=1}^{L_n}&\P\left\{\inf_{s\in [x_{i,n}, x_{i,n}+\e^{n\theta})}\frac{\mathcal{A}_1(s)}{(\log s)^{1/3}} \geq 
           -\gamma3^{1/3}\right\} <\infty.
\end{align*}
          Borel-Cantelli's lemma ensures that almost surely, for all but a finite number of integers $n\geq1$
          \begin{align*}
          \inf_{s\in [x_{i,n}, x_{i,n}+\e^{n\theta})}\frac{\mathcal{A}_1(s)}{(\log x)^{1/3}} <
           -\gamma3^{1/3}, \quad \text{for all $1\leq i\leq L_n$}.
          \end{align*}
          This means that the lower level set $\mathcal{L}(\gamma)$ is $\theta$-thick a.s. (see Definition \ref{def:thick}). Therefore, we conclude from Proposition \ref{prop:thick} that for $\theta\in (\gamma^3, 1)$
          \begin{align*}
          {\rm Dim_{H}}\left\{t>\e: \mathcal{A}_1(t) <-\gamma\left(3\log t\right)^{1/3}\right\}\geq1-\theta, \quad \text{a.s.}
          \end{align*}
          We let $\theta\downarrow\gamma^3$ to complete the proof. 
\end{proof}

\section{Macroscopic Hausdorff dimension of the Airy$_2$ process}
\label{s:HDU}

As we have seen in Section \ref{sec:th1}, the approach to Macroscopic Hausdorff dimension of the level sets of the Airy$_1$ process relies on the exponential decay rate of the covariance of the Airy$_1$ process. This method does not apply to the Airy$_2$ process since
           its covariance decays polynomially; see Widom \cite{Wid04}.  We will make use of the exponential LPP to  give some quantitative estimates on the tail probabilities of the 
           maximum and minimum of the Airy$_2$ process. First we state the estimates about the maximum and minimum of the Airy$_2$ process over an interval which can be obtained using the weak convergence of the point-to-point passage times in exponential LPP \eqref{e:airy2limit} and assuming the corresponding exponential LPP estimates. The exponential LPP estimates will be proved in Section \ref{s:ELPP}.

\begin{proposition}
    \label{p:Airy_2HDUL}
    For any $\vep, \delta> 0, x$ sufficiently large (depending on $\vep$) and $t$ sufficiently large (depending on $\vep,\delta$) there exists $c>0$ such that the following holds:
    \[
    \P \left(\sup_{s \in [0,t]} \ca_2(s) \leq x \right) \leq \e^{-\e^{-\frac 43 (1+\vep)x^{3/2}}t^{1-\delta}}+\e^{-c (\log t)^2}.
    \]
\end{proposition}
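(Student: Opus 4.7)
The plan is to reduce the claim to an analogous statement for exponential LPP passage times and pass to the limit via Theorem \ref{t:lppairy2limit}. Discretize $[0,t]$ by setting $s_j = jt^{\delta}$ for $j=1,\ldots,m$, with $m = \lceil t^{1-\delta}\rceil$. By monotonicity, $\P(\sup_{s\in[0,t]}\ca_2(s) \leq x) \leq \P(\ca_2(s_j) \leq x \text{ for all } j)$, and since the finite-dimensional marginals of $\ca_2$ are absolutely continuous, Theorem \ref{t:lppairy2limit} together with the continuous mapping theorem gives
\[
\P\bigl(\ca_2(s_j) \leq x \text{ for all } j\bigr) = \lim_{N\to\infty} \P\bigl(T_N(s_j) \leq 4N + 2^{4/3}N^{1/3}(x - s_j^2) \text{ for all } j\bigr).
\]
Thus it suffices to bound the LPP probability uniformly for $N$ sufficiently large (in particular $N \gg t^3$, so that all endpoints $u_N(s_j)$ lie in the Airy scaling window).

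The decoupling exploits the geometry of LPP geodesics. The endpoints $u_N(s_j)$ are pairwise separated by $\asymp t^{\delta}N^{2/3}$ lattice units in each coordinate, while the geodesic $\Gamma_{u_N(s_j)}$ fluctuates transversally on scale $N^{2/3}$. Hence inflated tubes $\ct_j \subset \Z^2$ of transverse width $\asymp N^{2/3}(\log t)^{2/3}$ around the straight segment from $\bo{0}$ to $u_N(s_j)$ can be chosen pairwise disjoint (the small initial portion near $\bo{0}$ where all geodesics meet is handled by a standard shielding argument, or by recasting as a line-to-point LPP). Let $\tilde{T}_N^{(j)}$ be the restricted LPP time using only weights inside $\ct_j$. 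Then the $\tilde{T}_N^{(j)}$ are mutually independent, satisfy $\tilde{T}_N^{(j)} \leq T_N(s_j)$, and coincide with $T_N(s_j)$ off the event $\{\Gamma_{u_N(s_j)} \not\subset \ct_j\}$, which by the transversal fluctuation estimate of Section \ref{s:ELPP} has probability $\leq \e^{-c(\log t)^2}$ (the scale $(\log t)^{2/3}$ for the tube width is precisely tuned to the cubic lower tail of transversal displacement).

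Coupling this with a quantitative version of the GUE Tracy-Widom upper tail surviving the tube restriction, namely $\P(\tilde{T}_N^{(j)} > 4N + 2^{4/3}N^{1/3}(x - s_j^2)) \geq \e^{-\frac{4}{3}(1+\vep)x^{3/2}}$ (a second input from Section \ref{s:ELPP}), we obtain
\begin{align*}
\P\bigl(T_N(s_j) \leq \cdots \text{ for all } j\bigr)
&\leq \prod_{j=1}^m \P\bigl(\tilde{T}_N^{(j)} \leq \cdots\bigr) + m\,\e^{-c(\log t)^2} \\
&\leq \bigl(1 - \e^{-\frac{4}{3}(1+\vep)x^{3/2}}\bigr)^m + \e^{-c'(\log t)^2} \\
&\leq \e^{-t^{1-\delta}\,\e^{-\frac{4}{3}(1+\vep)x^{3/2}}} + \e^{-c'(\log t)^2},
\end{align*}
using $1-u \leq \e^{-u}$, $m \geq t^{1-\delta}$, and $t\,\e^{-c(\log t)^2} \leq \e^{-c'(\log t)^2}$ for $t$ large. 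Sending $N \to \infty$ yields the claim.

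The main obstacle lies in the LPP inputs deferred to Section \ref{s:ELPP}: the transversal fluctuation estimate with $\e^{-c(\log t)^2}$-tail guaranteeing geodesic confinement, and preservation of the sharp $\tfrac{4}{3}$-constant in the upper tail under the tube restriction (so that the product over $m$ terms gives the correct exponential rate). A secondary technical subtlety is the common initial segment of all geodesics near $\bo{0}$, which prevents strict tube disjointness there and typically requires either a line-to-point LPP reformulation or a local resampling argument.
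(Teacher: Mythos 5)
Your high-level strategy (discretize $[0,t]$, transfer to exponential LPP via Theorem~\ref{t:lppairy2limit}, decouple via restricted passage times, and feed in a tail estimate plus a confinement/transversal-fluctuation estimate) matches the paper. The genuine gap is precisely the point you flag as "secondary": the straight-line tubes $\ct_j$ emanating from $\bo{0}$ cannot be made disjoint near the origin without narrowing to a width far below the local $r^{2/3}$ fluctuation scale of the geodesics there, so a restricted passage time that honestly uses only weights inside such a tube loses a macroscopic amount on the initial segment. The suggested fixes do not repair this: "recasting as line-to-point LPP" would produce the Airy$_1$ process (Theorem~\ref{th:weak}), not $\ca_2$, and "local resampling" is not spelled out.

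The paper's Proposition~\ref{t:UBELPP} handles this differently, and the difference is the real content of the proof. It does not try to make paths disjoint from $\bo{0}$ onwards. Instead it cuts at an intermediate anti-diagonal $\cl_{2N/(\log t)^2}$ and uses super-additivity,
\[
T_N(z_j)\;\ge\;T_{\bo{0},v_j'}+\widetilde T_{v_j',u_N(z_j)},
\]
where $v_j'$ is the rescaled copy of $u_N(z_j)$ on the intermediate line and $\widetilde T$ is the passage time restricted to a parallelogram $P_j$ between the two lines. At depth $2N/(\log t)^2$, the separations of the $v_j'$ are of order $t^{\delta}(2N)^{2/3}/(\log t)^2$, which beats the parallelogram width $(2N)^{2/3}$ for $t$ large depending on $\delta$, so the $P_j$ are genuinely disjoint and the second summands are i.i.d.; these carry the $\e^{-\frac43(1+\vep)x^{3/2}}$ rate via \cite[Lemma 3.6]{BaB24}. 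The $T_{\bo{0},v_j'}$ are \emph{not} decoupled at all; the paper simply shows via a union bound and the sharp cubic lower tail that simultaneous large downward deviations are rare, $\P(\cb^c)\le t^{1-\delta}\e^{-cx^3(\log t)^2}\le\e^{-c'(\log t)^2}$, which is exactly where the $\e^{-c(\log t)^2}$ error term comes from (so this term is not a geodesic-confinement error as you guessed, but a lower-tail union bound at a shrunken scale). Your deduction $\ca\subset\cb^c\cup\cc$ style split is implicitly present in your outline, but you cannot reach it with full-length tubes; you need the mid-line super-additive decomposition to avoid the origin entirely.
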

\begin{proof}
         This is an immediate consequence of Proposition \ref{t:UBELPP} below and Theorem \ref{t:lppairy2limit}.
\end{proof}

\begin{proposition}
    \label{p:HDLLB}
    For any $\vep, \delta> 0, x$ sufficiently large (depending on $\vep$) and $t$ sufficiently large (depending on $\vep,\delta$) there exist $c, c'>0$ such that the following holds:
    \[
    \P \left(\inf_{s \in [0,t]} \ca_2(s) \geq -x \right) \leq \left(\e^{-\left \{\e^{-\frac {1}{12} (1+\vep)x^{3}}-\e^{-c (\log t)^3}\right \}}\right)^{t^{1-\delta}}+\e^{-c' (\log t)^{3/2}}.
    \]
\end{proposition}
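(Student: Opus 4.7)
The plan is to mirror the proof structure of Proposition \ref{p:Airy_2HDUL}: establish the analogous quantitative estimate at the exponential LPP level (a lower-tail counterpart of Proposition \ref{t:UBELPP}, to be proved later in Section \ref{s:ELPP}), and then transfer it to the Airy$_2$ process via the weak convergence recorded in Theorem \ref{t:lppairy2limit}. Thus Proposition \ref{p:HDLLB} should be presented as an immediate consequence of a single LPP proposition together with Theorem \ref{t:lppairy2limit}, leaving the real work to the LPP section.

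More precisely, I would first state the LPP analogue: for $\varepsilon,\delta>0$, for $x$ sufficiently large (in $\varepsilon$) and $t$ sufficiently large (in $\varepsilon,\delta$), and for all $N$ sufficiently large (in $x,t,\varepsilon,\delta$),
\[
\P\left(\inf_{s\in[0,t]}\frac{T_N(s)-4N}{2^{4/3}N^{1/3}}\geq -x\right)\leq \left(\e^{-\{\e^{-\frac{1}{12}(1+\varepsilon)x^3}-\e^{-c(\log t)^3}\}}\right)^{t^{1-\delta}}+\e^{-c'(\log t)^{3/2}}.
\]
Granting this, one takes $N\to\infty$: the functional $f\mapsto \inf_{s\in[0,t]} f(s)$ is continuous in the topology of uniform convergence on compact sets, and since $\mathcal{A}_2(s)-s^2\leq \mathcal{A}_2(s)$ for all $s$, the set $\{f:\inf_{s\in[0,t]} f(s)\geq -x\}$ is closed (indeed, if it is applied to $f = \mathcal{A}_2(\cdot)-(\cdot)^2$, the inequality for $\mathcal{A}_2$ follows a fortiori). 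Hence by the portmanteau theorem together with Theorem \ref{t:lppairy2limit},
\[
\P\left(\inf_{s\in[0,t]}\mathcal{A}_2(s)\geq -x\right)\leq \limsup_{N\to\infty}\P\left(\inf_{s\in[0,t]}\frac{T_N(s)-4N}{2^{4/3}N^{1/3}}\geq -x\right),
\]
and the desired bound is inherited.

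The bound itself is morally a second-moment / coarse-graining argument at the scale of the Airy$_2$ process: one partitions $[0,t]$ into roughly $t^{1-\delta}$ subintervals of length $t^{\delta}$, uses the one-point lower tail $\P(\mathcal{A}_2(0)<-x)\asymp \exp(-\frac{1}{12}x^3)$ on a single subinterval (the factor $\e^{-\frac{1}{12}(1+\varepsilon)x^3}$), absorbs the loss from controlling an infimum by a single point into the $\e^{-c(\log t)^3}$ correction, and accounts for approximate independence across subintervals to produce the $t^{1-\delta}$ exponent. The additive error $\e^{-c'(\log t)^{3/2}}$ reflects the cost of a global high-probability LPP event used to decouple subintervals.

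The main obstacle is therefore not here but in Section \ref{s:ELPP}, where the LPP version must be proved: quantitative upper-tail estimates on $\P(T_{N}(s)\geq 4N-x\cdot 2^{4/3}N^{1/3})$ giving precisely the exponent $-\frac{1}{12}x^{3}$ (this is the GUE Tracy--Widom lower-tail exponent transferred through the KPZ scaling), coupling/decoupling geodesics to justify near-independence across subintervals, and controlling the contribution of the maximum of $T_N$ over $s\in[0,t]$ to upgrade pointwise bounds to an infimum bound. Once those ingredients are in place, the deduction of Proposition \ref{p:HDLLB} from them is a one-line application of Theorem \ref{t:lppairy2limit} exactly as above.
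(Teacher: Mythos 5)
Your high-level strategy — state an LPP analogue, prove it in Section \ref{s:ELPP}, then transfer to $\ca_2$ through Theorem \ref{t:lppairy2limit} — is exactly the paper's plan (the paper invokes Proposition \ref{t:ELPPLB}). However, there is a genuine gap in the way you state the LPP estimate and in the step you use to transfer it.

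You write the LPP analogue as a bound on
$\P\bigl(\inf_{s\in[0,t]} \frac{T_N(s)-4N}{2^{4/3}N^{1/3}} \geq -x\bigr)$,
without the $+s^2\,2^{4/3}N^{1/3}$ correction. But by Theorem \ref{t:lppairy2limit}, the process $\frac{T_N(\cdot)-4N}{2^{4/3}N^{1/3}}$ converges to $\ca_2(\cdot)-(\cdot)^2$, not to $\ca_2(\cdot)$. Passing to the limit in your estimate therefore yields an upper bound on $\P\bigl(\inf_{s\in[0,t]}(\ca_2(s)-s^2)\geq -x\bigr)$ (after correctly handling the open/closed set direction in the portmanteau argument; as written, your displayed inequality also points the wrong way for a closed set). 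Your parenthetical "a fortiori" step — $\ca_2(s)-s^2\leq\ca_2(s)$ — gives the set inclusion $\{\inf(\ca_2(\cdot)-(\cdot)^2)\geq -x\}\subset\{\inf\ca_2\geq -x\}$, hence $\P\bigl(\inf(\ca_2-s^2)\geq -x\bigr)\leq\P\bigl(\inf\ca_2\geq -x\bigr)$. This runs in the wrong direction: you have upper-bounded a \emph{smaller} probability than the one you need. In fact, the same monotonicity at the LPP level shows your stated estimate is strictly weaker than what is needed, since $T_N(s)-4N \leq T_N(s)-4N+s^2 2^{4/3}N^{1/3}$. The correct LPP proposition must involve the parabola, namely a bound on
$\P\bigl(\min_{s\in[0,t]}\bigl(T_N(s)-4N+s^2\,2^{4/3}N^{1/3}\bigr)\geq -x\,2^{4/3}N^{1/3}\bigr)$,
as in the paper's Proposition \ref{t:ELPPLB}, so that the limiting process is $\ca_2(s)$ itself and the transfer becomes a straightforward weak-convergence argument (using the open-set version of portmanteau on $\{\inf > -x-\epsilon'\}$ and letting $\epsilon'\downarrow 0$).

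Aside from that, your description of the coarse-graining strategy underlying the LPP estimate (scale-$t^\delta$ partition, one-point $\e^{-\frac{1}{12}x^3}$ lower tail per block, near-independence across blocks, high-probability decoupling event causing the $\e^{-c'(\log t)^{3/2}}$ additive error) matches what the paper does in Section \ref{s:ELPP}.
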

\begin{proof}
          This is a consequence of Proposition \ref{t:ELPPLB} below and Theorem \ref{t:lppairy2limit}.
\end{proof}

\subsection{Proof of (\ref{eq:dimU2}) and (\ref{eq:dimL2}) } 

\begin{proof}[Proof of \eqref{eq:dimU2}]
           First, according to \cite[Corollary 1.3]{CHH23}, there exists a positive constant $x_0$ such that
         \begin{align}\label{uniformA2tail}
          \mathrm{P}\left(\sup_{0\leq s\leq 2}|\mathcal{A}_2(s)-\mathcal{A}_2(0)| \geq x\right) \leq \e^{-\frac{x^2}{16}}, 
         \quad \text{for all $x>x_0$}.
         \end{align}
         Using triangle inequality and the fact that  (see \cite{BBD08, DuV13})
         \begin{align*}
         \P\{\mathcal{A}_2(0)>x\} = \e^{-(\frac43+o(1))x^{3/2}}, \quad \text{as $x\to+\infty$},
         \end{align*}
         we see that for any $\vep>0$,  there exists $C>0$ (depending on $\vep$) such that for sufficiently large $x$ (depending on $\vep$)
    \begin{align}\label{eq:A2max}
    \P \left(\sup_{s \in [0,1]} \ca_2(s) \geq x \right) \leq C\e^{-\frac{4}{3}(1-\vep)x^{3/2}}.
    \end{align}
     Then we can combine \eqref{eq:A2max} with \cite[Theorem 4.1]{KKX17} to obtain that for $\gamma\in (0,1)$
           \begin{align}\label{eq:dim-upper2}
           {\rm Dim_{H}}\left\{t>\e: \mathcal{A}_2(t) >\gamma\left((3\log t)/4\right)^{2/3}\right\}\leq 1-\gamma^{3/2}, \quad \text{a.s.}
           \end{align} 
           
           We next show that 
            for $\gamma\in (0,1)$
           \begin{align}\label{eq:dim-lower2}
           {\rm Dim_{H}}\left\{t>\e: \mathcal{A}_2(t) >\gamma\left((3\log t)/4\right)^{2/3}\right\}\geq 1-\gamma^{3/2}, \quad \text{a.s.}
           \end{align} 
        Fix $\theta \in (\gamma^{3/2},1)$.  Analogous to the proof of the lower bound for \eqref{eq:dim}, 
        we consider the points 
\[
x_{i,n}=\e^n+(i-1)\e^{n \theta},
\]
for $i=1,2, \dots, L_n:=\lfloor (\e-1)\e^{n(1-\theta)}+1\rfloor.$ Since for all $i=1,2,\dots, L_n$, $x_{i,n}+e^{n \theta} \leq e^{n+1}$, we have
\[\P \left \{\sup_{s \in [x_{i,n},x_{i,n}+\e^{n \theta})} \frac{\ca_2(s)}{(\log s)^{2/3}} \leq \gamma \left(\frac 34 \right)^{2/3}\right \} \leq \P \left \{\sup_{s \in [x_{i,n},x_{i,n}+\e^{n \theta})} \ca_2(s) \leq \gamma \left((n+1) \frac 34 \right)^{2/3} \right \}.\]
Using stationarity of the Airy$_2$ process and applying Proposition \ref{p:Airy_2HDUL} with $t=\e^{n\theta}$, we have that for any $\vep,\delta>0$ there exists $n_0\in \mathbb{N}$ (depending on $\vep,\delta, \theta$) such that for all $n\geq n_0$ and for $i=1,2, \dots, L_n$
\[
\P \left \{\sup_{s \in [x_{i,n},x_{i,n}+\e^{n \theta})} \frac{\ca_2(s)}{(\log s)^{2/3}} \leq \gamma \left(\frac 34 \right)^{2/3}\right \} \leq \e^{-\e^{-(1+\vep)(n+1) \gamma^{3/2}}\e^{n\theta(1-\delta)}}+\e^{-c' (n \theta)^2}.
\]
Therefore, 
\begin{align*}
&\sum_{i=1}^{L_n}\P \left \{\sup_{s \in [x_{i,n},x_{i,n}+\e^{n \theta})} \frac{\ca_2(s)}{(\log s)^{2/3}} \leq \gamma \left(\frac 34 \right)^{2/3}\right \} \leq L_n \left( \e^{-\e^{-(1+\vep)(n+1) \gamma^{3/2}}\e^{n\theta(1-\delta)}}+\e^{-c' (n \theta)^2}\right)\\& \leq \e^{1+n(1-\theta)}\left( \e^{-\e^{-(1+\vep)(n+1) \gamma^{3/2}}\e^{n\theta(1-\delta)}}+\e^{-c' (n \theta)^2}\right),
\end{align*}
where the last inequality follows from the fact that $L_n \leq \e^{1+n(1-\theta)}$.
Finally, as $\theta >\gamma^{3/2}$, one can choose $\vep$ and $\delta$ (depending on $\theta$) such that 
\[
\theta(1-\delta)-(1+\vep)\gamma^{3/2}>0.
\]
Then 
\[
\sum_{i=1}^{L_n}\P \left \{\sup_{s \in [x_{i,n},x_{i,n}+\e^{n \theta})} \frac{\ca_2(s)}{(\log s)^{2/3}} \leq \gamma \left(\frac 34 \right)^{2/3}\right \} \leq \e^{1+n(1-\theta)}\left( \e^{-\e^{c n}}+\e^{-c' (n \theta)^2}\right),
\]
for some $c>0.$ With this choice of $\vep$ and $\delta$ we see that 
\[
\sum_{n=1}^{\infty}\sum_{i=1}^{L_n}\P \left \{\sup_{s \in [x_{i,n},x_{i,n}+\e^{n \theta})} \frac{\ca_2(s)}{(\log s)^{2/3}} \leq \gamma \left(\frac 34 \right)^{2/3}\right \} < \infty.
\]
Therefore, by Borel-Cantelli lemma, almost surely for sufficiently large $n,$
\[
\left \{ \frac{\ca_2(s)}{(\log s)^{2/3}} \geq \gamma \left( \frac 34 \right)^{2/3} \right \} \cap [x_{i,n}, x_{i,n}+\e^{n \theta}) \neq \emptyset \text{ for all } 1 \leq i \leq L_n.
\]
Therefore, by Proposition \ref{prop:thick}, almost surely,
\[
{\rm Dim_{H}}\left\{t>\e: \mathcal{A}_2(t) >\gamma\left((3\log t)/4\right)^{2/3}\right\} \geq 1-\theta.
\]
As $\theta \in (\gamma^{3/2},1)$ is arbitrary, we obtain \eqref{eq:dim-lower2}. 
This completes the proof. 
\end{proof}

We move on to prove \eqref{eq:dimL2}.
\begin{proof}[Proof of \eqref{eq:dimL2}]
          First, similar to the proof of Lemma \ref{lem:min}, we can apply Lemma \ref{lemma:minlow} below and
           Theorem \ref{t:lppairy2limit} to deduce that 
           for any $\vep>0$,  there exists $C>0$ (depending on $\vep$) such that for sufficiently large $x$ (depending on $\vep$)
    \begin{align}\label{eq:minlow}
    \P \left(\min_{s \in [0,1]} \ca_2(s) \leq -x \right) \leq C\e^{-\frac{1}{12}(1-\vep)x^3}.
    \end{align}
    Then, we can combine \eqref{eq:minlow} with the same arguments as in the proof of the lower bound for \eqref{eq:dimL1} to derive
    that for $\gamma\in (0, 1)$, ${\rm Dim_{H}}(\mathcal{L}_2(\gamma))\leq 1-\gamma^{3}$ almost surely.
    
    It remains to prove that for $\gamma\in (0, 1)$
    \begin{align}\label{eq:lower2}
    {\rm Dim_{H}}(\mathcal{L}_2(\gamma))\geq 1-\gamma^{3}, \quad \text{a.s.}
    \end{align}
Fix $\theta \in (\gamma^3,1)$. Again, we consider the points 
\[
x_{i,n}=\e^n+(i-1)\e^{n \theta},
\]
for $i=1,2, \dots, L_n:=\lfloor (\e-1)\e^{n(1-\theta)}+1\rfloor$ and it is clear that 
\[\P \left \{\inf_{s \in [x_{i,n},x_{i,n}+\e^{n \theta})} \frac{\ca_2(s)}{(\log s)^{1/3}} \geq -\gamma \left(12\right)^{1/3}\right \} \leq \P \left \{\inf_{s \in [x_{i,n},x_{i,n}+\e^{n \theta})} \ca_2(s) \geq -\gamma \left((n+1) 12\right)^{1/3} \right \}.\]
Applying Proposition  \ref{p:HDLLB} with $t=\e^{n \theta}$, we see that for any $\vep,\delta>0$ there exists $n_0\in \mathbb{N}$ (depending on $\vep,\delta, \theta$) such that for all $n\geq n_0$ and for $i=1,2, \dots, L_n$
\[
\P \left \{\inf_{s \in [x_{i,n},x_{i,n}+\e^{n \theta})} \frac{\ca_2(s)}{(\log s)^{1/3}} \geq -\gamma \left(12\right)^{1/3}\right \} \leq \left(\e^{-\left \{\e^{-(1+\vep)(n+1) \gamma^{3}}-\e^{-c n^3 }\right \}}\right)^{\e^{n\theta(1-\delta)}}+\e^{-c' n^{3/2}}.
\]
Therefore, 
\begin{align*}
&\sum_{i=1}^{L_n}\P \left \{\inf_{s \in [x_{i,n},x_{i,n}+\e^{n \theta})} \frac{\ca_2(s)}{(\log s)^{1/3}} \geq -\gamma \left(12 \right)^{1/3}\right \} \leq L_n \left(\left(\e^{-\left \{\e^{-(1+\vep)(n+1) \gamma^{3}}-\e^{-c n^3 }\right\}}\right)^{\e^{n\theta(1-\delta)}}+\e^{-c' n^{3/2}}\right)\\& \qquad\qquad\leq \e^{1+n(1-\theta)}\left(\left(\e^{-\left \{\e^{-(1+\vep)(n+1) \gamma^{3}}-\e^{-c n^3 }\right\}}\right)^{\e^{n\theta(1-\delta)}}+\e^{-c' n^{3/2}}\right),
\end{align*}
where the last inequality follows from the fact that $L_n \leq \e^{1+n(1-\theta)}$.
Finally, as $\theta >\gamma^{3}$, one can choose $\vep$ and $\delta$ (depending on $\theta$) such that 
\[
\theta(1-\delta)-(1+\vep)\gamma^{3}>0.
\]
Then 
\[
\sum_{i=1}^{L_n}\P \left \{\inf_{s \in [x_{i,n},x_{i,n}+\e^{n \theta})} \frac{\ca_2(s)}{(\log s)^{1/3}} \geq -\gamma \left(12 \right)^{1/3}\right \} \leq \e^{1+n(1-\theta)}\left((\e^{-\e^{c n}}+\e^{-c' n^{3/2}}\right),
\]
for some $c>0.$
With this choice we see that 
\[
\sum_{n=1}^{\infty}\sum_{i=1}^{L_n}\P \left \{\inf_{s \in [x_{i,n},x_{i,n}+\e^{n \theta})} \frac{\ca_2(s)}{(\log s)^{1/3}} \geq -\gamma \left(12\right)^{1/3}\right \} < \infty.
\]
Therefore, by Borel-Cantelli lemma, almost surely for all sufficiently large $n,$
\[
\left \{ \frac{\ca_2(s)}{(\log s)^{1/3}} \leq -\gamma \left( 12\right)^{1/3} \right \} \cap [x_{i,n}, x_{i,n}+\e^{n \theta}) \neq \emptyset \text{ for all } 1 \leq i \leq L_n.
\]
Therefore, by Proposition \ref{prop:thick}, almost surely,
\[
{\rm Dim}_{\rm H}(\cl_2(\gamma)) \geq 1-\theta.
\]
As $\theta \in (\gamma^{3},1)$ is arbitrary, we obtain \eqref{eq:lower2}.
This completes the proof.
\end{proof}
\section{Estimates in Exponential LPP}
\label{s:ELPP}
In this section we state and prove all the exponential LPP estimates we have used in the previous sections. First we fix some notations. For $v\in \mathbb{Z}^2$, $T_{\mathcal{L}_0, v}$ denotes the last passage time between $v$ and $\mathcal{L}_0$. Let $I^{\delta}$ denote the interval of length $\floor{\delta(2N)^{2/3}}$
on $\mathcal{L}_{2N}$ with midpoint $(N,N)$. Let $I^{m, \delta}$ denote the interval of length  $\floor{\delta(2N)^{2/3}}$
on $\mathcal{L}_{2N}$ with midpoint $ (N-\floor{m(2N)^{2/3}}, N+\floor{m(2N)^{2/3}})$. The first lemma is a sharp upper bound for the event that the minimum line-to-point passage time over an interval is small.
\begin{lemma}[{{\cite[Lemma 2.7]{BaB24}}}] \label{lem:LPP}
          For any $\varepsilon >0$, there exists $\delta>0$  such that for any $x$ sufficiently large (depending on $\varepsilon$), for $N$ sufficiently large (depending on $x, \varepsilon$)
          \begin{align*}
          \P\left\{\min_{u\in I^\delta} T_{\mathcal{L}_0, u}-4N \leq -x 2^{4/3}N^{1/3}\right\} \leq \e^{-\frac16(1-\varepsilon)x^3}.
          \end{align*}
\end{lemma}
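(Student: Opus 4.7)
The plan is to bootstrap a one-point lower-tail estimate at the midpoint $(N,N)$ into the minimum-over-interval bound, by combining GOE Tracy-Widom-type asymptotics with a comparison argument that controls the fluctuations of $u \mapsto T_{\mathcal{L}_0, u}$ across $I^\delta$.

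First, I would establish a sharp, uniform-in-$N$ one-point bound
\[
\P\left\{T_{\mathcal{L}_0, (N,N)} - 4N \leq -x \cdot 2^{4/3} N^{1/3}\right\} \leq \e^{-(1-\varepsilon/2)x^3/6},
\]
for $x$ and $N$ sufficiently large. The constant $1/6$ is consistent with Theorem \ref{th:weak}: since $(T_{\mathcal{L}_0, (N,N)}-4N)/(2^{4/3}N^{1/3}) \Rightarrow 2^{1/3}\mathcal{A}_1(0)$ and $\P\{\mathcal{A}_1(0)<-y\}=\e^{-(1/3+o(1))y^3}$, one has $\P\{2^{1/3}\mathcal{A}_1(0)<-x\}=\e^{-(1/6+o(1))x^3}$. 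A quantitative pre-limit version should follow from the Laguerre orthogonal ensemble representation for line-to-point LPP together with known asymptotics of the associated Fredholm determinant, or alternatively from available large-deviation estimates for exponential LPP calibrated to the correct rate.

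Second, I would extend this to the minimum over $I^\delta$. Writing $u = u_N(s)$ with $|s|\leq \delta/2$, the mean of $T_{\mathcal{L}_0, u_N(s)} - 4N$ differs from that at the midpoint by a parabolic correction of size at most $(\delta^2/4)\cdot 2^{4/3}N^{1/3}$; choosing $\delta$ small absorbs this shift into a further small loss in $\varepsilon$. The minimum over $|s|\leq \delta/2$ would then be bounded by combining (i) the one-point estimate at a reference point together with a constant-cardinality union bound, and (ii) a modulus-of-continuity estimate asserting that the probability that the scaled profile $s\mapsto (T_N^*(s)-4N)/(2^{4/3}N^{1/3})$ drops by an additional amount comparable to $x$ within the interval decays strictly faster than $\e^{-cx^3}$. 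Such a modulus bound should follow from quantitative transversal fluctuation and geodesic coalescence estimates in exponential LPP.

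The main obstacle is preserving the sharp constant $1/6$ in the exponent. A direct union bound over the $\sim \delta N^{2/3}$ lattice points of $I^\delta$ picks up a polynomial-in-$N$ prefactor that cannot be absorbed into the target bound for fixed $x$ as $N\to\infty$, and any standard chaining scheme degrades the constant unless the modulus-of-continuity estimate has tails strictly lighter than $\e^{-cx^3}$. The technical heart of the proof therefore lies in exploiting stationarity of the scaled profile (together with the $1/3$ fluctuation exponent) to show that low-value events for $T_{\mathcal{L}_0, u}$ are sufficiently clustered in $u\in I^\delta$ that a finite-complexity cover — whose cardinality depends only on $\delta$ and $\varepsilon$, not on $N$ — is enough.
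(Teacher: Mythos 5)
This lemma is stated in the paper as a quotation from \cite[Lemma~2.7]{BaB24}; no proof is given in the present paper, so I can only assess your sketch on its own merits rather than against an in-text argument.

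Your high-level plan (a sharp one-point lower-tail bound at the correct rate $\tfrac16$, followed by a reduction of the minimum over $I^\delta$ to a finite cover plus an oscillation estimate) is the natural one, and you correctly identify that a naive union bound over the $\sim \delta N^{2/3}$ lattice points of $I^\delta$ is fatal because the $N^{2/3}$ prefactor cannot be absorbed at fixed $x$. However, two points in the sketch need to be corrected or made precise, and the second one is where the real work lies.

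First, a factual slip: you write that $\E\left[T_{\mathcal L_0, u_N(s)}\right]-4N$ carries a parabolic correction of size $\asymp \delta^2 N^{1/3}$ across $I^\delta$, which you then propose to absorb into a loss in $\varepsilon$. This is not so. By translation invariance along $\mathcal L_0$, the distribution of $T_{\mathcal L_0, u}$ is \emph{the same} for every $u\in\mathcal L_{2N}$; the centering $4N$ in Theorem~\ref{th:weak} is $s$-independent precisely because $\ca_1$ is stationary with no subtracted parabola. The quadratic drift is a feature of the \emph{point-to-point} time $T_N(s)$ (Theorem~\ref{t:lppairy2limit}), not of $T_N^*(s)$. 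This error is harmless (there is simply nothing to absorb), but it signals that the asymmetry between the two LPP initial conditions hasn't been tracked carefully.

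Second, and more seriously, the modulus-of-continuity step is where the proof actually is, and the description given does not pin it down. You write that ``the modulus-of-continuity estimate has tails strictly lighter than $\e^{-cx^3}$,'' attributing the mechanism vaguely to stationarity and the $1/3$ fluctuation exponent. The moderate-deviation regularity of the LPP profile (the analogue of \eqref{uniformA2tail}) is Gaussian, i.e.\ tails of order $\e^{-cy^2/\eta}$ on a cell of scaled width $\eta$; at fixed mesh this is \emph{heavier}, not lighter, than $\e^{-cx^3}$ when $y\asymp x$, so a literal reading of the sketch fails. What saves the argument is a geometric inequality, not a generic chaining bound: if $w$ is the point where the geodesic $\Gamma_{\mathcal L_0, g_j}$ crosses $\mathcal L_{2N-K}$, then for any $u$ ordered above $w$,
\[
T_{\mathcal L_0, g_j} - T_{\mathcal L_0, u} \;\le\; T_{w,g_j} - T_{w,u},
\]
and taking $K\asymp \eta^{3/2}N$ makes the right-hand side a difference of point-to-point passage times at depth $K$ whose fluctuation scale $K^{1/3}\asymp \eta^{1/2}N^{1/3}$ is a factor $\eta^{1/2}$ smaller than the deviation budget $a x N^{1/3}$. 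This yields a cubic tail $\e^{-c\,a^3 x^3/\eta^{3/2}}$ --- the same $x^3$ rate, but with an $\eta^{-3/2}$--amplified constant, which is what you actually need --- together with a transversal-fluctuation input to localize $w$. Spelling out precisely this decomposition through an intermediate antidiagonal (the pattern visible in Propositions~\ref{t:UBELPP} and~\ref{t:ELPPLB} of this paper) is the missing content; without it, the claim ``such a modulus bound should follow from quantitative transversal fluctuation and geodesic coalescence estimates'' is an appeal to the desired conclusion rather than an argument.
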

Next lemma is similar to the previous lemma for point-to-point passage time.
\begin{lemma}{\cite[Lemma 3.9]{BaB24}}
\label{lemma:minlow}For any $ \vep>0, m_0>0$ there exists $\delta>0$ (depending on $\vep$),such that for $m \in [0,m_0]$ and for all $x$ sufficiently large (depending on $\vep$), $N$ sufficiently large (depending on $m_0,\vep, x$) 
\[
\P \left( \min_{u_N(s) \in I^{m,\delta}  }\left(T_N(s)-4N+2^{4/3}N^{1/3}s^2\right) \leq -2^{4/3}N^{1/3}x\right) \leq \e^{-\frac{1}{12}(1-\vep) x^3}.
\]
\end{lemma}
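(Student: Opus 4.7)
The plan is to reduce the interval-minimum bound to a one-point lower tail at a reference point of $I^{m,\delta}$ plus an oscillation estimate. Pick $s^*=s^*(m)$ with $u_N(s^*)$ closest to the midpoint of $I^{m,\delta}$. For any $s$ with $u_N(s)\in I^{m,\delta}$,
\[
T_N(s)-4N+2^{4/3}N^{1/3}s^2 \;\geq\; \bigl(T_N(s^*)-4N+2^{4/3}N^{1/3}(s^*)^2\bigr)-\Delta_N,
\]
where $\Delta_N:=\sup_{s}\bigl|(T_N(s)-T_N(s^*))+2^{4/3}N^{1/3}(s^2-(s^*)^2)\bigr|$. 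A union bound then gives, for any $\eta\in(0,1)$,
\[
\P\!\left(\min_{u_N(s)\in I^{m,\delta}} (T_N(s)-4N+2^{4/3}N^{1/3}s^2) \leq -2^{4/3}N^{1/3}x\right) \leq \P\bigl(A(s^*)\leq -2^{4/3}N^{1/3}(1-\tfrac{\vep}{2})x\bigr)+\P\bigl(\Delta_N\geq \tfrac{\vep}{2}\cdot 2^{4/3}N^{1/3}x\bigr),
\]
where $A(s^*):=T_N(s^*)-4N+2^{4/3}N^{1/3}(s^*)^2$.

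For the first term I would invoke the sharp one-point lower tail for exponential LPP, obtained via its relation with the Laguerre unitary ensemble (Ledoux--Rider-type estimates): for any $\eta>0$, for $m\in[0,m_0]$, and for $y,N$ sufficiently large,
\[
\P\bigl(A(s^*)\leq -2^{4/3}N^{1/3}y\bigr)\leq \e^{-\tfrac{1}{12}(1-\eta)y^3}.
\]
Plugging in $y=(1-\vep/2)x$ and picking $\eta$ small enough that $(1-\eta)(1-\vep/2)^3\geq 1-\vep$ produces the desired $\e^{-\tfrac{1}{12}(1-\vep)x^3}$ contribution.

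The real obstacle is controlling $\Delta_N$. The deterministic parabolic part $2^{4/3}N^{1/3}|s^2-(s^*)^2|\leq 2^{4/3}N^{1/3}\cdot 2\delta(m_0+\delta)$ is absorbed into the $\vep x/4$ budget once $x$ is large (depending on $\delta,m_0,\vep$). The random oscillation of $T_N$ on a transversal slab of Airy-scale $\delta$ only obeys Gaussian-type modulus-of-continuity bounds of the form $\e^{-c(\vep x)^2/\delta}$, which at fixed $\delta$ cannot dominate a cubic tail $\e^{-x^3/12}$ when $x\to\infty$. To overcome this I would use a bottleneck/path-resampling argument specific to exponential LPP: at any putative minimizer $s_0\in I^{m,\delta}$, pick a vertex $v$ on the geodesic $\Gamma_{u_N(s_0)}$ near the midline $\cl_N$; then $T_N(s_0)=T_{\bo{0},v}+T_{v,u_N(s_0)}$, whereas $T_N(s^*)\geq T_{\bo{0},v}+T_{v,u_N(s^*)}$ by maximality. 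A large discrepancy $T_N(s^*)-T_N(s_0)$ therefore reduces to a difference of two point-to-point passage times from a common vertex to two nearby endpoints on $\cl_{2N}$, which does carry a cubic-exponential tail at the relevant scale. Taking $\delta=\delta(\vep)$ sufficiently small makes this control efficient enough to be absorbed into the $\e^{-\tfrac{1}{12}(1-\vep)x^3}$ budget, uniformly for $m\in[0,m_0]$ and $N$ sufficiently large; combining the two bounds completes the proof.
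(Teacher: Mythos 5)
The paper does not actually prove this lemma; it is quoted verbatim as \cite[Lemma 3.9]{BaB24} and no proof is given, so there is no internal argument against which to compare your attempt. Evaluating the proposal on its own terms: the reduction to a one-point lower tail at the reference point $s^*$ plus an oscillation term $\Delta_N$, and the input of a sharp one-point bound $\e^{-\frac{1}{12}(1-\eta)y^3}$, are both fine, and you have correctly diagnosed the central obstacle---namely that $\Delta_N$ over an Airy-scale window of width $\delta$ carries only a Gaussian tail $\e^{-c(\vep x)^2/\delta}$, which cannot dominate $\e^{-x^3/12}$ once $x\to\infty$, since $\delta$ must be fixed (depending only on $\vep$) before $x$ is sent to infinity.

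The genuine gap is that the bottleneck argument you propose does not close this hole. There are two problems. First, the superadditivity inequality points the wrong way: placing $v$ on $\Gamma_{u_N(s_0)}$ gives $T_N(s_0)=T_{\bo 0,v}+T_{v,u_N(s_0)}$ and $T_N(s^*)\geq T_{\bo 0,v}+T_{v,u_N(s^*)}$, hence $T_N(s^*)-T_N(s_0)\geq T_{v,u_N(s^*)}-T_{v,u_N(s_0)}$, which is a \emph{lower} bound on the discrepancy and gives no control on $\Delta_N$; to get an upper bound you would need $v$ on the reference geodesic $\Gamma_{u_N(s^*)}$, which is itself random and would require a transversal-fluctuation localization plus a union bound over a deterministic box of candidate $v$'s, none of which is addressed. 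Second and more fundamentally, for $v$ near the midline $\cl_N$ the profile $s\mapsto T_{v,u_N(s)}$ along $\cl_{2N}$ is exactly the one whose local increments over Airy-scale $\delta$ intervals are Brownian; the difference $T_{v,u_N(s^*)}-T_{v,u_N(s_0)}$ is therefore sub-Gaussian at scale $\sqrt{\delta}\,N^{1/3}$, and its tail is again of the form $\e^{-c(\vep x)^2/\delta}$. The assertion that a common-midline-vertex difference of two point-to-point passage times to nearby endpoints ``carries a cubic-exponential tail at the relevant scale'' is not correct, so the second term in your decomposition continues to dominate for large $x$ at fixed $\delta=\delta(\vep)$, and the claimed bound $\e^{-\frac{1}{12}(1-\vep)x^3}$ does not follow.
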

  We proceed to give some estimates on  the tail probabilities of the maximum and minimum of the exponential LLP, which play crucial role in the study of Macroscopic Hausdorff dimension of the Airy$_2$ process. In particular, we prove Proposition \ref{t:UBELPP} and Proposition \ref{t:ELPPLB}. Their corresponding estimates for the Airy process were Proposition \ref{p:Airy_2HDUL} and Proposition \ref{p:HDLLB}. We start with an estimate on the expectation of last passage percolation. 
              \begin{lemma}
    \label{lem: expectation estimate}For any $\gamma>0$ with $\gamma < \frac mn < \gamma^{-1},$ there exists constant $C>0$ (depending only on $\gamma$) such that for all $m, n \geq 1$
\begin{equation}
\label{eq: expectation estimate}
|\E(T_{\bo{0},(m,n)})-( \sqrt{m}+\sqrt{n})^2| \leq Cn^{1/3}.
\end{equation}
\end{lemma}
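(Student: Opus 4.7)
The plan is to reduce the expectation bound to known uniform one-point tail estimates for the exponential LPP passage time in the bulk direction regime $\gamma < m/n < \gamma^{-1}$, where the characteristic fluctuation scale is comparable to $n^{1/3}$.

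First I would recall the standard one-point upper and lower tail bounds for $T_{\bo{0},(m,n)}$ in the exponential LPP (due to Ledoux--Rider, Baik--Ben Arous--P\'ech\'e, and refined in subsequent works such as those used elsewhere in \cite{BaB24}). Explicitly, under the direction constraint $\gamma < m/n < \gamma^{-1}$, there exist constants $C_\gamma, c_\gamma, x_\gamma > 0$ depending only on $\gamma$ such that for every $x \geq x_\gamma$,
\begin{align*}
\P\left\{T_{\bo{0},(m,n)} - (\sqrt{m}+\sqrt{n})^2 \geq x\, n^{1/3}\right\} &\leq C_\gamma \e^{-c_\gamma x^{3/2}},\\
\P\left\{T_{\bo{0},(m,n)} - (\sqrt{m}+\sqrt{n})^2 \leq -x\, n^{1/3}\right\} &\leq C_\gamma \e^{-c_\gamma x^{3}}.
\end{align*}
The key point is that these constants are \emph{uniform} in $m, n$ throughout the cone $\gamma < m/n < \gamma^{-1}$, which is the real content of the lemma's quantitative statement.

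Given these bounds, the proof itself is short. I would set
\[
Z_{m,n} := \frac{T_{\bo{0},(m,n)} - (\sqrt{m}+\sqrt{n})^2}{n^{1/3}},
\]
and estimate
\[
|\E(Z_{m,n})| \leq \E|Z_{m,n}| = \int_0^\infty \P\{|Z_{m,n}| > x\}\, \d x \leq x_\gamma + \int_{x_\gamma}^\infty C_\gamma\bigl(\e^{-c_\gamma x^{3/2}} + \e^{-c_\gamma x^3}\bigr)\, \d x,
\]
which is bounded by a constant $C$ depending only on $\gamma$. Multiplying through by $n^{1/3}$ yields the claim $|\E(T_{\bo{0},(m,n)}) - (\sqrt{m}+\sqrt{n})^2| \leq C n^{1/3}$.

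The main obstacle is justifying the uniform tail bounds above for all $(m,n)$ with $1 \leq m,n$ in the prescribed cone, not just in the large-$n$ asymptotic regime. For large $n$, one can invoke either the exact formulas from Johansson's determinantal representation or the Laguerre ensemble identity, together with known non-asymptotic tail bounds for the largest eigenvalue. For small $n$ (finitely many $(m,n)$), the bound \eqref{eq: expectation estimate} is trivial since both $\E(T_{\bo{0},(m,n)})$ and $(\sqrt{m}+\sqrt{n})^2$ are bounded. The clean way to present this is to cite the standard references for these tail bounds rather than reproducing them; the conceptual structure of the proof is simply ``sharp tail bounds on the fluctuation scale $n^{1/3}$ imply an $n^{1/3}$ bound on the expectation deviation.''
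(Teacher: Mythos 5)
Your proposal matches the paper's proof essentially step for step: both reduce $|\E(T_{\bo{0},(m,n)})-(\sqrt{m}+\sqrt{n})^2|$ to $\E|Z_{m,n}|$ for the centered, $n^{1/3}$-scaled passage time, both use the tail-integral formula $\E|Z_{m,n}|=\int_0^\infty \P\{|Z_{m,n}|\geq x\}\,\d x$, and both invoke uniform-in-$(m,n)$ one-point tail bounds in a fixed direction cone (Ledoux--Rider, also \cite[Theorem 4.1]{BGZ21}) to bound the integral by a constant depending only on $\gamma$. The only cosmetic difference is that the paper uses the precise upper-tail form $C\e^{-c\min\{xn^{1/3},x^{3/2}\}}$ (valid for all $x>0$), which avoids the separate cutoff at a threshold $x_\gamma$ and the separate treatment of small $n$ that you flag as the ``main obstacle''; with the cited reference, no such case distinction is needed.
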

\begin{proof}
    Note that by \cite[Theorem 2]{LR10} (see also \cite[Theorem 4.1]{BGZ21}) it follows that for all $(m,n)$ as in the statement there exist $C, c>0$ (depending on $\gamma$) such that for all $x>0$
    \begin{align}
    &\P \left( T_{\bo{0}, (m,n)}-( \sqrt{m}+\sqrt{n})^2 \geq x n^{1/3}\right) \leq C\e^{-c \min\{xn^{1/3},x^{3/2}\}},\label{eq:p1}\\
    &\P \left( T_{\bo{0}, (m,n)}-( \sqrt{m}+\sqrt{n})^2 \leq -x n^{1/3}\right) \leq C\e^{-c x^3}.\label{eq:p2}
    \end{align}
    Now
    \begin{align*}
        &\left \vert\frac{\E(T_{\bo{0},(m,n)})-( \sqrt{m}+\sqrt{n})^2}{n^{1/3}} \right \vert \leq \E \left \vert \frac{T_{\bo{0},(m,n)}-( \sqrt{m}+\sqrt{n})^2}{n^{1/3}}\right \vert \\
        &\quad = \int_{0}^{\infty} \P \left( \left \vert T_{\bo{0},(m,n)}-( \sqrt{m}+\sqrt{n})^2\right \vert \geq x n^{1/3}\right)\d x \\
        &\quad \leq \int_{0}^{\infty} \P \left( T_{\bo{0},(m,n)}-( \sqrt{m}+\sqrt{n})^2 \geq x n^{1/3}\right)\d x +\int_{0}^{\infty} \P \left( T_{\bo{0},(m,n)}-( \sqrt{m}+\sqrt{n})^2 \leq -x n^{1/3}\right)\d x
        \end{align*}
        Using the estimates in \eqref{eq:p1} and \eqref{eq:p2}, we see that for some $C,c>0$ depending on $\gamma$
        \begin{align*}
        \left \vert\frac{\E(T_{\bo{0},(m,n)})-( \sqrt{m}+\sqrt{n})^2}{n^{1/3}} \right \vert &\leq \int_{0}^\infty C\e^{-c\min \{x^{3/2} , xn^{1/3}\}}\d x
       +\int_{0}^{\infty} C\e^{-c x^3}\d x.
        \end{align*}
        Clearly, the two integrals on the right hand side are uniformly bounded in $n$.
\end{proof}
           

\begin{proposition}
    \label{t:UBELPP}
    For any $\vep, \delta>0, x$ sufficiently large (depending on $\vep$) and $t$ sufficiently large (depending on $\vep, \delta$), there exists $N(\vep,t, x) \in \N, c>0$ such that for all $N \geq N(\vep, t, x)$
    \[
    \P \left(\max_{s \in [0,t]} T_N(s)-4N+s^2 2^{4/3}N^{1/3} \leq x 2^{4/3}N^{1/3}  \right) \leq \e^{-\e^{-\frac 43(1+\vep)x^{3/2}}t^{1-\delta}}+\e^{-c(\log t)^2}.
    \]
\end{proposition}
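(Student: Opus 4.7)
The plan is to reduce the supremum over $[0,t]$ to a maximum over $m\sim t^{1-\delta}$ well-spaced discrete points and then exploit approximate independence of the corresponding passage times via a disjoint-tube construction in the exponential LPP environment. First, I would set $h:=t^{\delta}$ and $s_k:=kh$ for $k=0,1,\dots,m-1$ with $m:=\lfloor t/h\rfloor$; since the supremum dominates the maximum over $\{s_k\}$, the event in question is contained in $\bigcap_{k}\{T_N(s_k)-4N+s_k^{2}2^{4/3}N^{1/3}\le x\cdot 2^{4/3}N^{1/3}\}$. Crucially, the endpoints $v_k:=u_N(s_k)$ are spaced by $\sim 2t^{\delta}(2N)^{2/3}$ along $\mathcal{L}_{2N}$, which far exceeds the transversal fluctuation scale $O(N^{2/3})$ of a typical geodesic; this is what opens room for the disjoint-tube step.

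Next, I would fix a small constant $c_0\in(0,1)$, set $r_0:=\lfloor c_0 N\rfloor$, and let $w_k\in\mathcal{L}_{r_0}$ be the lattice point closest to the straight segment from $\bo{0}$ to $v_k$. I would build parallelograms $R_k\subset\Z^{2}$ around the segment from $w_k$ to $v_k$ of transversal width $W\sim t^{\delta}N^{2/3}/2$; for $t$ large these $R_k$'s are mutually disjoint. Let $\hat T_k$ denote the LPP from $w_k$ to $v_k$ computed using only weights in $R_k$. The $\hat T_k$'s are then mutually independent, and $T_N(s_k)\ge T_{\bo{0},w_k}+\hat T_k$ always by path concatenation.

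On a \emph{good event} $G$ on which each $T_{\bo{0},w_k}$ stays within $\eta:=Cr_0^{1/3}(\log t)^{2/3}$ of its mean $(\sqrt{w_k^{(1)}}+\sqrt{w_k^{(2)}})^{2}$ (controlled via Lemma~\ref{lem: expectation estimate} and the lower-tail estimate \eqref{eq:p2}, together with a union bound over the $m$ indices, giving $\P(G^{c})\le\e^{-c(\log t)^{2}}$ for $C$ large enough), the above comparison turns the intersection into $\bigcap_k\{\hat T_k\le a_k'\}$ for suitable thresholds $a_k'$. Independence combined with the one-point lower bound
$$\P(\hat T_k>a_k')\ge\e^{-\tfrac{4}{3}(1+\vep)x^{3/2}},$$
which follows from the lower bound on the upper tail of $T_{w_k,v_k}$ coming from the GUE Tracy--Widom asymptotics via Theorem~\ref{t:lppairy2limit}, together with a transversal-fluctuation estimate showing $\P(\hat T_k\neq T_{w_k,v_k})\ll\e^{-\frac{4}{3}(1+\vep)x^{3/2}}$ once $W$ is chosen large compared to $x^{1/2}N^{2/3}$, then yields the product bound
$$\P\!\left(\bigcap_k\{\hat T_k\le a_k'\}\right)\le\bigl(1-\e^{-\frac{4}{3}(1+\vep)x^{3/2}}\bigr)^{m}\le\e^{-\e^{-\frac{4}{3}(1+\vep)x^{3/2}}t^{1-\delta}}.$$

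The main obstacle I anticipate is the careful calibration of the $O(N^{1/3})$ expectation corrections: by Lemma~\ref{lem: expectation estimate}, $\E T_{\bo{0},w_k}$ and $\E T_{w_k,v_k}$ are both determined only up to $O(N^{1/3})$ error, which sits precisely at the fluctuation scale and must be absorbed into the $\vep$-slack in $\e^{-\frac{4}{3}(1+\vep)x^{3/2}}$ without weakening the bound. A secondary technical hurdle is choosing $c_0$, the width $W$, and the good-event deviation scale $\eta$ consistently so that simultaneously (i) the $R_k$'s are genuinely disjoint, (ii) the probability of a geodesic leaving $R_k$ is much smaller than the target $\e^{-\frac{4}{3}(1+\vep)x^{3/2}}$, and (iii) the union-bounded bad-event probability aggregates to $\e^{-c(\log t)^{2}}$ rather than something larger.
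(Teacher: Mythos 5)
The structural plan you lay out --- discretize to $m\sim t^{1-\delta}$ points, split the geodesic at an intermediate antidiagonal into a common lower part plus disjoint-tube restricted upper parts, then use independence --- matches the paper's strategy. However, there is a genuine gap in your choice of the intermediate line $\mathcal{L}_{r_0}$ with $r_0=\lfloor c_0 N\rfloor$ for a \emph{fixed} constant $c_0$. With that choice, the natural fluctuation scale of $T_{\bo 0, w_k}$ is $r_0^{1/3}\asymp N^{1/3}$, so to make the union-bounded bad-event probability decay like $\e^{-c(\log t)^2}$ over $m\sim t^{1-\delta}$ indices you are forced to take the good-event deviation window $\eta\asymp N^{1/3}(\log t)^{2/3}$, as you do. But this $\eta$ then enters as additive slack in the thresholds $a_k'$ for $\hat T_k$, and since the fluctuation scale for $\hat T_k$ is also $\asymp N^{1/3}$, the threshold is shifted by $\asymp(\log t)^{2/3}$ in KPZ units. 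That shift can only be absorbed into the $(1+\vep)$ factor if $(\log t)^{2/3}\lesssim\vep x$, i.e.\ $x\gtrsim(\log t)^{2/3}/\vep$; the proposition makes no such assumption (``$x$ sufficiently large depending on $\vep$'' and ``$t$ sufficiently large depending on $\vep,\delta$'' are independent), and indeed you cannot impose it without further argument. You anticipated a calibration difficulty but attributed it to the $O(N^{1/3})$ errors from Lemma~\ref{lem: expectation estimate}; those are harmless (they are $O(1)$ in KPZ units, so absorbed once $\vep x$ exceeds a constant). The real issue is the $(\log t)^{2/3}$-sized $\eta$.

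The paper resolves this by moving the intermediate line \emph{much closer to the origin}: it uses $\mathcal{L}_{2N/(\log t)^2}$, so that $T_{\bo 0,v_j'}$ has natural fluctuation scale $N^{1/3}/(\log t)^{2/3}$. It then demands only a deviation of $\frac{\vep}{100}x\,2^{4/3}N^{1/3}$ below the mean --- exactly the $\vep$-slack, so it does not spoil the main product term --- and this deviation, measured in the correct (smaller) natural units, is $\asymp \vep x(\log t)^{2/3}$, a genuinely large deviation; the sharp lower-tail bound then gives $\asymp\e^{-c\vep^3x^3(\log t)^2}$ per index, and the union bound over $t^{1-\delta}$ indices is $\le\e^{-c'(\log t)^2}$ once $x$ is large enough that $c\vep^3x^3\ge 1$. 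In short: the deviation window must be the $\vep$-slack itself, and the $\e^{-c(\log t)^2}$ decay must come from shrinking the fluctuation scale of the lower segment (taking $r_0\propto N/(\log t)^2$), not from widening $\eta$.
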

\begin{proof}
          We adopt the strategy in  \cite[Theorem 3.2]{BaB24} to prove the above estimate. 
           For $j=1,2,\dots, \lfloor t^{1-\delta}\rfloor=\ell_t$, we consider the following points in $[0,t].$
    \[
    z_j:=(j-1)t^{\delta}.
    \]
    We have 
    \begin{align*}
        &\P \left(\sup_{s \in [0,t]} T_N(s)-4N+s^2 2^{4/3}N^{1/3} \leq x2^{4/3}N^{1/3}\right)\\
        &\qquad \leq \P \left(\bigcap^{\ell_t}_{j=1}\left \{ T_N(z_j)-4N+z_j^22^{4/3}N^{1/3} \leq x2^{4/3}N^{1/3} \right \} \right).
    \end{align*}
    Let 
    \[
    \ca:=\left \{\text{for all } j,  T_N(z_j)-4N+z_j^22^{4/3}N^{1/3} \leq x2^{4/3}N^{1/3} \right \}.
    \] 
    We chose $N$ sufficiently large depending on $t$. Without loss of generality assume that $\frac{N}{(\log t)^2}$ is an integer \footnote{Note that the correct definition should involve the floor or the ceiling function. But to avoid notational overhead we assume $\frac{N}{(\log t)^2}$ to be an integer. It can be checked easily that it does not affect the arguments in a non-trivial way. }. Let us consider the line $\cl_{\frac{2N}{(\log t)^2}}$. We define 
    $$v_j':= \left(\frac{N}{( \log t)^2}-\lfloor \frac{1}{(\log t)^2}z_j(2N)^{2/3}\rfloor, \frac{N}{( \log t)^2}+\lfloor \frac{1}{(\log t)^2}z_j(2N)^{2/3}\rfloor\right).$$
     For $j=1,2,\dots, \ell_t$, let $I_j$ and $J_j$ denote the intervals of length $(2N)^{2/3}$ on $\cl_{2N}$ and $\cl_{\frac{2N}{(\log t)^2}}$ respectively and $I_j$ has midpoint $u_N(z_j)$ and $J_j$ has midpoint $v_j'$. Let $P_j$ denote the parallelogram whose one pair of opposite sides lie on $I_j$ and $J_j$. We consider the following events. 
    \begin{align*}
    &\cb:= \left \{\text{for all } j, T_{\bo{0},v_j'}-\E \left( T_{\bo{0},v_j'}\right) \geq -\frac {\vep}{100} x 2^{4/3}N^{1/3}\right \},\\
    & \cc:=\left \{\text{for all } j,\widetilde{T}_{v_j', u_N(z_j)}-\E \left(T_{v_j',u_N(z_j)}\right) \leq x\left(1+\frac{\vep}{25} \right)2^{4/3}N^{1/3} \right \},
    \end{align*}
    where $\widetilde{T}_{v_j',u_N(z_j)}$ is the maximum passage time between $v_j'$ and $u_N(z_j),$ over all up/right paths restricted in the parallelogram $P_j.$ As for large enough $t$ (depending on $\delta$), $P_j$ are disjoint, the event $\cc$ is an intersection of independent events. We observe the following: by super-additivity and the fact that the restricted passage time is smaller than the actual passage time we have for all $j$
    \[
    T_N(z_j) \geq T_{\bo 0, v_j'}+\widetilde{T}_{v_j',u_N(z_j)}.
    \]
    Therefore, on the event $\mathcal{A},$
    \begin{align}
    & T_{\bo 0, v_j'}-\E \left( T_{\bo 0, v_j'}\right)+\widetilde{T}_{v_j',u_N(z_j)}-\E \left( T_{v_j',u_N(z_j)}\right)\nonumber \\
    & \qquad
    \label{eq:union bound}\leq 4N-z_j^2 2^{4/3}N^{1/3}+x2^{4/3}N^{1/3}- \left(\E \left( T_{\bo 0, v_j'}\right)+\E \left( T_{v_j',u_N(z_j)}\right) \right).
    \end{align}
 Using Lemma \ref{lem: expectation estimate} and the Taylor series expansion we get that 
\begin{align*}
\E \left( T_{\bo 0, v_j'}\right)&=\frac{2N}{(\log t)^2}+\frac{2N}{(\log t)^2} \sqrt{1-\frac{\left(\lfloor \frac{1}{(\log t)^2}z_j(2N)^{2/3}\rfloor\right)^2}{\frac{N^2}{(\log t)^4}}} + O(N^{1/3})\\
&=\frac{4N}{(\log t)^2}-\frac{N}{(\log t)^2}\frac{\left(\lfloor \frac{1}{(\log t)^2}z_j(2N)^{2/3}\rfloor\right)^2}{\frac{N^2}{(\log t)^4}}+O(N^{1/3})\\
&=\frac{4N}{(\log t)^2}-\frac{1}{(\log t)^2}z_j^22^{4/3}N^{1/3}+O(N^{1/3}).
\end{align*}
Now let us define $w_{j,n}:=\lfloor z_j(2N)^{2/3}\rfloor - \lfloor \frac{1}{(\log t)^2}z_j(2N)^{2/3}\rfloor$. Then, by \eqref{eq: expectation estimate}
\begin{align*}
    \E \left( T_{v_j',u_N(z_j)}\right)&=2N\left(1-\frac{1}{(\log t)^2}\right)+2N\left(1-\frac{1}{(\log t)^2}\right)\sqrt{1-\frac{w_{j,n}^2}{N^2 \left( 1-\frac{1}{(\log t)^2}\right)^2}}+ O(N^{1/3})\\
    &=4N \left(1-\frac{1}{(\log t)^2} \right)-z_j^2\left(1-\frac{1}{(\log t)^2} \right)2^{4/3}N^{1/3}+O(N^{1/3}).
\end{align*}
Thus we have that there exists a constant $C>0$ such that for sufficiently large $x$ (depending on $\varepsilon$) and $N$ sufficiently large (depending on $t$)
\[
\E \left( T_{\bo 0, v_j'}\right)+\E \left( T_{v_j',u_N(z_j)}\right) \geq 4N-z_j^22^{4/3}N^{1/3}-CN^{1/3} \geq 4N-z_j^22^{4/3}N^{1/3}-x\frac{3 \vep}{100}N^{1/3}.
\]
Hence, from \eqref{eq:union bound} we see that for sufficiently large $N$ (depending on $t$)
    \[
    \ca \subset \cb^c \cup \cc.
    \]
Now, we find upper bounds for $\P(\cb^c)$. In \cite[Proposition 1.4]{Jo00}, it was shown that the last passage time between $\bo{0}$ and $(m,n)$ is equal in distribution to the largest eigenvalue of Laguerre Unitary Ensemble. In \cite[Theorem 1.4, (ii)]{BBBK24}, sharp estimates for the lower tail of these largest eigenvalues were obtained. Therefore, using this correspondence between the last passage times and random matrices we also have the sharp estimates for the lower tail of passage times. Finally a union bound we have for sufficiently large $N$ (depending on $x,t,\vep$), sufficiently large $x, t$ (depending  and $\vep$)
\[
\P(\cb^c) \leq t^{1-\delta} \e^{-cx^3 (\log t)^2} \leq \e^{-c'(\log t)^2},
\]
where in the above $c$ can be chosen to be any fixed constant smaller than $\frac{1}{12}$ (this follows from \cite[Theorem 1.4, (ii)]{BBBK24}. Once the choice of $c$ is fixed, all the other parameters $x,t,N$ will depend on this choice. For the event $\mathcal{C}$ note that by \cite[Lemma 3.6]{BaB24} and independence, we get that for sufficiently large $N$ depending on $\vep, t$ and $x$ sufficiently large depending on $\vep$
\[
\P(\cc) \leq \left( 1-\e^{-\frac 43 (1+\vep)x^{3/2}}\right)^{t^{1-\delta}} \leq \e^{-\e^{-\frac 43 (1+\vep)x^{3/2}}t^{1-\delta}}.
\] 
This completes the proof.
\end{proof}
\begin{proposition}
      \label{t:ELPPLB}
       For any $\vep, \delta>0, x$ sufficiently large (depending on $\vep$) and $t$ sufficiently large (depending on $\vep, \delta$), 
       there exists $N(\vep, t, x) \in \N, c,c'>0$ such that for all $N \geq N(\vep,t, x)$
    \[
    \P \left(\min_{s \in [0,t]} T_N(s)-4N+s^2 2^{4/3}N^{1/3} \geq -x 2^{4/3}N^{1/3}  \right) \leq \left(\e^{-\left \{\e^{-\frac{1}{12}(1+\vep)x^{3}}-\e^{-c (\log t)^3}\right \}}\right)^{t^{1-\delta}}+\e^{-c'(\log t)^{3/2}}.
    \]
\end{proposition}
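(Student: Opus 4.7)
The plan is to adapt the discretization-and-independence strategy of Proposition \ref{t:UBELPP}, with the direction of the key inequality reversed and with the upper and lower LPP tails interchanged. Sample at $z_j=(j-1)t^\delta$ for $j=1,\ldots,\ell_t:=\lfloor t^{1-\delta}\rfloor$, and let
\[
\mathcal{A}':=\bigcap_{j=1}^{\ell_t}\Bigl\{T_N(z_j)-4N+z_j^2\,2^{4/3}N^{1/3}\geq -x\,2^{4/3}N^{1/3}\Bigr\}
\]
be the event whose probability we must bound. The critical difference from Proposition \ref{t:UBELPP}: there, superadditivity gave $T_N(z_j)\geq T_{\bo{0},v_j'}+\widetilde T_{v_j',u_N(z_j)}$ for free, while here I need an \emph{upper} bound on $T_N(z_j)$ of comparable sharpness, which I will supply through a transversal-fluctuation estimate.

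First I set up the geometry in the style of the proof of Proposition \ref{t:UBELPP}: an intermediate line $\mathcal{L}_{r_N}$ close to $\bo{0}$ with $r_N=2N/(\log t)^a$ for an $a$ chosen so that the tail of the relevant line-to-point passage time tidies up correctly, typical entry points $v_j'$, intervals $J_j\subset\mathcal{L}_{r_N}$ of length $(\log t)(2N)^{2/3}$ around $v_j'$, and parallelograms $P_j$ joining $J_j$ to $I_j\subset\mathcal{L}_{2N}$. For $t$ large (depending on $\delta$) the $P_j$ are pairwise disjoint, so the restricted passage times $\widetilde T_j:=\max_{v\in J_j}\widetilde T_{v,u_N(z_j)}^{P_j}$ are independent across $j$. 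A standard transversal-fluctuation bound (see \cite{BGZ21}) supplies an event $F_j$ with $\P(F_j^c)\leq \e^{-c(\log t)^3}$ on which $\Gamma_{\bo{0},u_N(z_j)}$ enters $\mathcal{L}_{r_N}$ inside $J_j$ and remains in $P_j$ afterwards, so on $F_j$ one has the crucial \emph{upper} bound
\[
T_N(z_j)\leq \max_{v\in J_j}T_{\bo{0},v}+\widetilde T_j.
\]

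Next I introduce the analogue of $\mathcal{B}$ from the proof of Proposition \ref{t:UBELPP}, but with its lower tail replaced by the upper tail: $\mathcal{B}':=\bigcap_{j}\{\max_{v\in J_j}T_{\bo{0},v}-\E[T_{\bo{0},v_j'}]\leq \tfrac{\vep}{100}\,x\,2^{4/3}N^{1/3}\}$. Using the upper-tail KPZ estimate (exponent $3/2$) for the line-to-point passage time together with a union bound over the $\ell_t$ points, a suitable choice of $a$ gives $\P((\mathcal{B}')^c)\leq \e^{-c'(\log t)^{3/2}}$. On $\mathcal{A}'\cap\bigcap_j F_j\cap \mathcal{B}'$, the same Taylor expansion of expectations that appears in the proof of Proposition \ref{t:UBELPP} together with Lemma \ref{lem: expectation estimate} forces $\widetilde T_j-\E[T_{v_j',u_N(z_j)}]\geq -(1+\vep)\,x\,2^{4/3}N^{1/3}$ for every $j$. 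A matching one-point lower bound on the lower tail of the exponential LPP passage time (via the LUE representation \cite[Proposition 1.4]{Jo00} together with \cite{BBBK24}, giving the optimal exponent $\tfrac{1}{12}$), combined with a transversal-fluctuation comparison between the restricted and unrestricted passage times, then yields
\[
\P\bigl(\widetilde T_j-\E[T_{v_j',u_N(z_j)}]<-(1+\vep)\,x\,2^{4/3}N^{1/3}\bigr)\geq \e^{-\frac{1}{12}(1+\vep)x^3}-\e^{-c(\log t)^3}.
\]
Independence of the $\widetilde T_j$'s together with $1-y\leq \e^{-y}$ then give
\[
\P\bigl(\mathcal{A}'\cap\textstyle\bigcap_j F_j\cap \mathcal{B}'\bigr)\leq \bigl(\e^{-\{\e^{-\frac{1}{12}(1+\vep)x^3}-\e^{-c(\log t)^3}\}}\bigr)^{t^{1-\delta}},
\]
and a final union bound with $\P(\bigcup_j F_j^c)\leq \e^{-c''(\log t)^3}$ and the above bound on $\P((\mathcal{B}')^c)$ completes the argument.

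The main technical obstacle is the displayed matching \emph{lower} bound on the lower tail of the restricted $\widetilde T_j$, with error only $\e^{-c(\log t)^3}$. The classical matching lower bound applies to the \emph{unrestricted} LPP lower tail; transferring it to $\widetilde T_j$ requires showing that the unrestricted $\bo{0}$-to-$u_N(z_j)$ geodesic stays inside $P_j$ with probability at least $1-\e^{-c(\log t)^3}$, and the choice of width $(\log t)(2N)^{2/3}$ for $J_j$ is precisely calibrated so that this error and the transversal-fluctuation error appear with the same exponent as in the statement.
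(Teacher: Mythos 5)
You correctly identify the overall shape of the argument (discretize at $z_j$, intermediate line near the origin, disjoint parallelograms, independence), and in fact the paper's proof follows this same blueprint. But there is one genuine quantitative gap, and it is precisely at the place you flag as ``the main technical obstacle'': your key lower bound is stated for the max over the \emph{wrong} interval, and over that interval it is false.

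You define $J_j\subset\mathcal{L}_{r_N}$ to have length $(\log t)(2N)^{2/3}$ and then set $\widetilde T_j:=\max_{v\in J_j}\widetilde T^{P_j}_{v,u_N(z_j)}$. In rescaled KPZ units this interval has order $\log t$ width, over which the line-to-point profile decorrelates; the lower tail of a maximum of $\sim\log t$ essentially independent fluctuations is therefore of order $\exp(-c\,x^3\log t)$, not $\exp(-\tfrac{1}{12}(1+\vep)x^3)$. Feeding that into the product $\prod_j\bigl(1-\P(\widetilde T_j-\E[\cdot]<-(1+\vep)x\,2^{4/3}N^{1/3})\bigr)$ with $\ell_t\approx t^{1-\delta}$ factors produces something of the form $\exp(-t^{1-\delta}\e^{-cx^3\log t})$, which does not give the claimed bound $\bigl(\e^{-\{\e^{-\frac{1}{12}(1+\vep)x^3}-\e^{-c(\log t)^3}\}}\bigr)^{t^{1-\delta}}$. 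The one-point LUE/\cite{BBBK24} lower bound and a transversal-fluctuation comparison do not rescue this: they control the restricted-versus-unrestricted gap for a single endpoint, not the blow-up incurred by maximizing over a $(\log t)$-wide interval.

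The fix, which is what the paper does, requires \emph{two} nested intervals on the intermediate line. Keep $J_j$ of length $\lfloor\log t\,(2N)^{2/3}\rfloor$ only as an anchor for the parallelogram $P_j$ (so the $P_j$ are disjoint and the geodesic comfortably stays inside), but introduce a second, much smaller interval $\widetilde J_j$ of length $\mu(2N)^{2/3}$ with $\mu$ small depending on $\vep$, and put both the $\mathcal{B}$-type and $\mathcal{E}$-type events on $\widetilde J_j$, together with a separate event (the paper's $\mathcal{C}$) forcing the geodesic to cross the intermediate line inside $\widetilde J_j$ — which still happens up to probability $\e^{-c(\log t)^6}$ because $\widetilde J_j$ is $\mu(\log t)^2$ times the local transversal-fluctuation scale. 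The crucial quantitative input is then exactly \cite[Lemma 3.12]{BaB24}: for $\mu$ small, $\P\bigl(\max_{v\in\widetilde J_j}\{\widetilde T_{v,u_N(z_j)}-\E[T_{v,u_N(z_j)}]\}\geq -(1+\tfrac{\vep}{50})x\,2^{4/3}N^{1/3}\bigr)\leq 1-\e^{-\frac{1}{12}(1+\vep)x^3}+\e^{-c(\log t)^3}$, which is a max-over-a-\emph{short}-interval statement that genuinely cannot be replaced by the one-point bound plus a fluctuation comparison. Your proposal does not contain this second interval or the input lemma, and without them the displayed inequality in the middle of your sketch fails.
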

    \begin{proof} The proof is  similar to that of \cite[Theorem 3.8]{BaB24}.  We consider the points $z_j$ as defined in the proof of Proposition \ref{t:ELPPLB}. We have 
        \begin{align*}
        &\P \left(\min_{s \in [0,t]} T_N(s)-4N+s^2 2^{4/3}N^{1/3} \geq -x2^{4/3}N^{1/3} \right)\\
        &\qquad\leq \P \left(\bigcap^{\ell_t}_{j=1}\left \{ T_N(z_j)-4N+z_j^22^{4/3}N^{1/3} \geq -x2^{4/3}N^{1/3} \right \} \right).
    \end{align*}
     Let 
    \[
    \ca:=\left \{\text{for all } j,  T_N(z_j)-4N+z_j^22^{4/3}N^{1/3} \geq -x2^{4/3}N^{1/3} \right \}.
    \] 
    We chose $N$ sufficiently large depending on $t$. Let us consider the line $\cl_{\frac{2N}{(\log t)^3}}$ (as before for ease of notation we will assume that $\frac{N}{(\log t)^3}$ is an integer). We define
     $$v_j':= \left(\frac{N}{( \log t)^3}-\lfloor \frac{1}{(\log t)^3}z_j(2N)^{2/3}\rfloor, \frac{N}{( \log t)^3}+\lfloor \frac{1}{(\log t)^3}z_j(2N)^{2/3}\rfloor\right).$$
    For $j=1,2,\dots, \ell_t$ let $I_j$ and $J_j$ denote the intervals of length $\lfloor \log t(2N)^{2/3}\rfloor$ on $\cl_{2N}$ and $\cl_{\frac{2N}{(\log t)^3}}$ respectively and $I_j$ has midpoint $u_N(z_j)$ and $J_j$ has midpoint $v_j'$. Let $\widetilde{J}_j$ is the interval of length $\mu (2N)^{2/3}$ on $\cl_{\frac{2N}{(\log t)^3}}$ with midpoint $v_j'$ for some small enough $\mu$ which we will choose later depending on $\vep$. Let $P_j$ denote the parallelogram whose one pair of opposite sides lie on $I_j$ and $J_j$. We consider the following events. 
    \begin{align*}
    &\cb:=\left \{\text{for all } j,\max_{v \in \widetilde{J}_j} \{T_{\bo{0},v}-\E \left(T_{\bo{0},v} \right)\} \leq \frac{\vep}{100}x 2^{4/3}N^{1/3} \right \}.\\
    & \cc:= \{\text{for all } j, \Gamma_{u_N(z_j)} \cap \widetilde{J}_j^c =\emptyset\}.\\
    & \cd:=\{\text{for all }j,\Gamma_{u_N(z_j)} \cap P_j^c =\emptyset\}.\\
    & \ce:=\left \{\text{for all }j, \max_{v \in \widetilde{J}_j} \{\widetilde{T}_{v,u_N(z_j)}-\E \left(T_{v,z_j}\right)\} \geq -\left(1+\frac{\vep}{50}\right)x2^{4/3}N^{1/3} \right \},
    \end{align*}
    where $\widetilde{T}_{v,u_N(z_j)}$ is the maximum passage time over all paths restricted to $P_j$ between $v$ and $u_N(z_j)$. Note that as for sufficiently large $t$, the parallelograms $P_j$ are disjoint, $\ce$ is intersection of independent events. We observe that on the events $\cb \cap \cc \cap \cd \cap \ce^c,$ for all $j$ there exists some $\tilde{v}_j \in \tilde{J}_j$
    \[
    T_N(z_j)=T_{\bo{0},\tilde{v}_j}+\widetilde{T}_{\tilde{v}_j,u_N(z_j)}.
    \]
    From this, Lemma \ref{lem: expectation estimate} and similar calculations as we did to estimate the expectations in the proof of Proposition 4.2 we see that
    \[
    \cb \cap \cc \cap \cd \cap \ce^c \subset \ca^c.
    \]
    Hence,
   \[
    \ca \subset \cb^c \cup \cc^c \cup \cd^c \cup \ce.
    \]
    We first find an estimate for $\P \left(\cb^c \cup \cc^c \cup \cd^c \right).$ As consequence of \cite[Theorem 4.2, (ii)]{BGZ21} and a union bound we get there exists $c_1>0$ (depending on $\vep$) such that for sufficiently large $t$ depending on $\delta>0$, for $N$ sufficiently large (depending on $x$)
    \[
    \P \left(\cb^c \right) \leq t^{1-\frac \delta2}e^{-c_1 x^{3/2}(\log t)^{3/2}}
    \]
    Now if the event $\cc^c$ happens then for some $j,$ the transversal fluctuation of $\Gamma_{u_N(z_j)}$ on the line $\cl_{\frac{2N}{(\log t)^3}}$ is more than $\mu (\log t)^2 \left(\frac{2N}{(\log t)^3} \right)^{2/3}$. Due to \cite[Proposition 2.1, (i)]{BBB23} and a union bound we obtain that this event has small probability. Precisely, there exists $c_2>0$ (depending on $\mu$) such that for sufficiently large $t$ (depending on $\mu$) and sufficiently large $N$ (depending on $t$)
    \[
    \P\left( \cc^c \right) \leq t^{1-\delta}e^{-c_2 (\log t)^6}.
    \]
    Finally, if the event $\cd^c$ happens then for some $j$, the geodesic $\Gamma_{u_N(z_j)}$ goes out of the parallelogram $P_j$. Due to \cite[Proposition C.9]{BGZ21} and a union bound we have the following upper bound. There exists $c_3>0$ such that for all $t$ sufficiently large and $N$ sufficiently large depending on $t$
    \[
    \P \left( \cd^c\right) \leq t^{1-\delta}e^{-c_3(\log t)^3}.
    \]
    Combining all the above we get that for all $t$ sufficiently large (depending on $\vep, \mu$) and $N$ sufficiently large (depending on $t$) there exists $c>0$ (depending on $\mu, \vep$) such that 
    \[
    \P \left(\cb^c \cup \cc^c \cup \cd^c \right) \leq t^{1-\frac \delta2}\e^{-c(\log t)^{3/2}}.
    \]
    Now we consider the event $\ce.$ For a fixed $j$, we get that 
    \begin{align*}
    &\P \left(\max_{v \in \widetilde{J}_j} \{\widetilde{T}_{v,u_N(z_j)}-\E \left(T_{v,u_N(z_j)}\right)\} \geq -\left(1+\frac{\vep}{50}\right)x2^{4/3}N^{1/3}\right) \leq\\& \P \left( \max_{v \in \widetilde{J}_j} \{T_{v,u_N(z_j)}-\E \left(T_{v,u_N(z_j)}\right)\} \geq -\left(1+\frac{\vep}{50}\right)x2^{4/3}N^{1/3}\right)+\P \left( \mathsf{LTF}\right),
    \end{align*}
    where the event $\mathsf{LTF}$ is defined as follows:
    \[
    \mathsf{LTF}:=\{\text{there exists } v \in \widetilde{J}_j \text{ such that } \Gamma_{v,u_N(z_j)} \cap P_j^c \neq \emptyset\}.
    \]
    By ordering of geodesics (see \cite[Lemma 2.3]{BSS17B},\cite[Lemma 11.2]{BSS14}, \cite[Lemma 5.7]{H20}) we see that if the event $\mathsf{LTF}$ happens then certain geodesics will have large transversal fluctuation. Thus we apply \cite[Proposition C.9]{BGZ21} we see that for sufficiently large $N$ and $t$
    \[
    \P \left(\mathsf{LTF} \right) \leq \e^{-c (\log t)^3}.
    \]
    Finally, by \cite[Lemma 3.12]{BaB24}, for any $\vep>0$ there exists $\mu>0$ such that for sufficiently large $N$ (depending on $\vep,t,x$) and $x$ sufficiently large (depending on $\vep$), $t$ sufficiently large depending on $\vep$
\begin{align*}
&\P \left(\max_{v \in \widetilde{J}_j} \{\widetilde{T}_{v,u_N(z_j)}-\E \left(T_{v,u_N(z_j)}\right)\} \geq -\left(1+\frac{\vep}{50}\right)x2^{4/3}N^{1/3}\right)\\
&\qquad\qquad \qquad \leq 1-\e^{-\frac{1}{12}(1+\vep)x^3}+\e^{-c (\log t)^3} \leq \e^{-\left \{\e^{-\frac{1}{12}(1+\vep)x^3}-\e^{-c (\log t)^3}\right \}}.
\end{align*}
By independence we get 
\[
\P \left(\ce \right) \leq \left(\e^{-\left \{\e^{-\frac{1}{12}(1+\vep)x^3}-\e^{-c (\log t)^3}\right \}} \right)^{t^{1-\delta}}
\]
Combining all the above we get 
\[
\P \left(\ca \right) \leq \left(\e^{-\left \{\e^{-\frac{1}{12}(1+\vep)x^3}-e^{-c (\log t)^3}\right \}}\right)^{t^{1-\delta}}+\e^{-c'(\log t)^{3/2}}.
\]
This completes the proof.
\end{proof}

\noindent\textbf{Acknowledgment}.  SB thanks Riddhipratim Basu for useful discussion.  SB is supported by scholarship from National Board for Higher Mathematics (NBHM) (ref no: 0203/13(32)/2021-R\&D-II/13158).
FP is supported in part by National Key R\&D Program of China (No. 2022YFA 1006500) and National Natural Science Foundation of China (No. 12201047).

 \bigskip
 
 \begin{small}
\noindent\textbf{Sudeshna Bhattacharjee} Department of Mathematics, Indian Institute of Science, Bengaluru, India.\\
Email: \texttt{sudeshnab@iisc.ac.in}\\
\end{small}
 
 \begin{small}
\noindent\textbf{Fei Pu}
Laboratory of Mathematics and Complex Systems,
School of Mathematical Sciences, Beijing Normal University, 100875, Beijing, China.\\
Email: \texttt{fei.pu@bnu.edu.cn}\\
\end{small}

\end{document}